\theoremstyle{plain}
\newtheorem{theorem}{Theorem}
\newtheorem{lemma}[theorem]{Lemma}
\newtheorem{proposition}[theorem]{Proposition}
\theoremstyle{definition}
\newtheorem*{definition*}{Definition}
\newtheorem{algorithm}[theorem]{Algorithm}
\theoremstyle{remark}
\newtheorem{remark}[theorem]{Remark}
\numberwithin{equation}{section}
\def\bkE{{\rm I\kern-.22em E}}
\def\bkH{{\rm I\kern-.22em H}}
\def\bkR{{\rm I\kern-.17em R}}
\def\RR{\bkR}
\def\tri{\mathcal{T}}
\def\R3{{\Bbb R}^3}
\DeclareMathOperator{\cone}{cone}
\begin{document}

\title{The Thurston norm via Normal Surfaces}
\author{Daryl Cooper and Stephan Tillmann}
\dedication{To Bill Thurston on the occasion of his sixtieth birthday}

\begin{abstract} 
Given a triangulation of a closed, oriented, irreducible, atoroidal 3--mani\-fold every oriented, incompressible surface may be isotoped into normal position relative to the triangulation. Such a normal oriented surface is then encoded by non-negative integer weights, 14 for each 3--simplex, that describe how many copies of each oriented normal disc type there are. The Euler characteristic and homology class are both linear functions of the weights. There is a convex polytope in the space of weights, defined by linear equations given by the combinatorics of the triangulation, whose image under the homology map is the unit ball, $\mathcal{B},$ of the Thurston norm.

Applications of this approach include (1) an algorithm to compute $\mathcal{B}$ and hence the Thurston norm of any homology class, (2) an explicit exponential bound on the number of vertices of $\mathcal{B}$ in terms of the number of simplices in the triangulation, (3) an algorithm to determine the fibred faces of $\mathcal{B}$ and hence an algorithm to decide whether a 3--manifold fibres over the circle.
\end{abstract}

\primaryclass{57M25, 57N10}
\keywords{3--manifold, Thurston norm, triangulation, normal surface}
\makeshorttitle

This work was inspired by a desire to understand the topological significance of the faces of the unit ball of the Thurston norm. The main result of this paper implies that the unit ball of the Thurston norm for a closed, orientable, irreducible, atoroidal three-manifold is the projection under a linear map of a certain polyhedron in transversely oriented normal surface space. This polyhedron can be computed from a triangulation using  linear algebra.  In order to make this paper accessible to a wide audience we do not assume that the reader is familiar with the theory of normal surfaces. To keep the paper short we have considered only the case that $M$ is closed and orientable and have introduced the bare minimum of the theory of transversely oriented normal surfaces for this application. To facilitate a quick overview, we first state most definitions and results leading up to the main result (Theorem \ref{maintheorem}) as well as the applications, and fill the gaps at the end of the paper. A more general treatment is to be found in \cite{CT}, and references to normal surface theory can be found in \cite{JR}. 

This work was partially supported by NSF grant DMS-0405963. The  authors thank Andrew Casson and Bus Jaco for helpful comments on this project.


\subsection*{Transversely oriented normal surfaces}

\begin{wrapfigure}{l}{28mm}
\begin{center}
  \includegraphics[width=2.5cm]{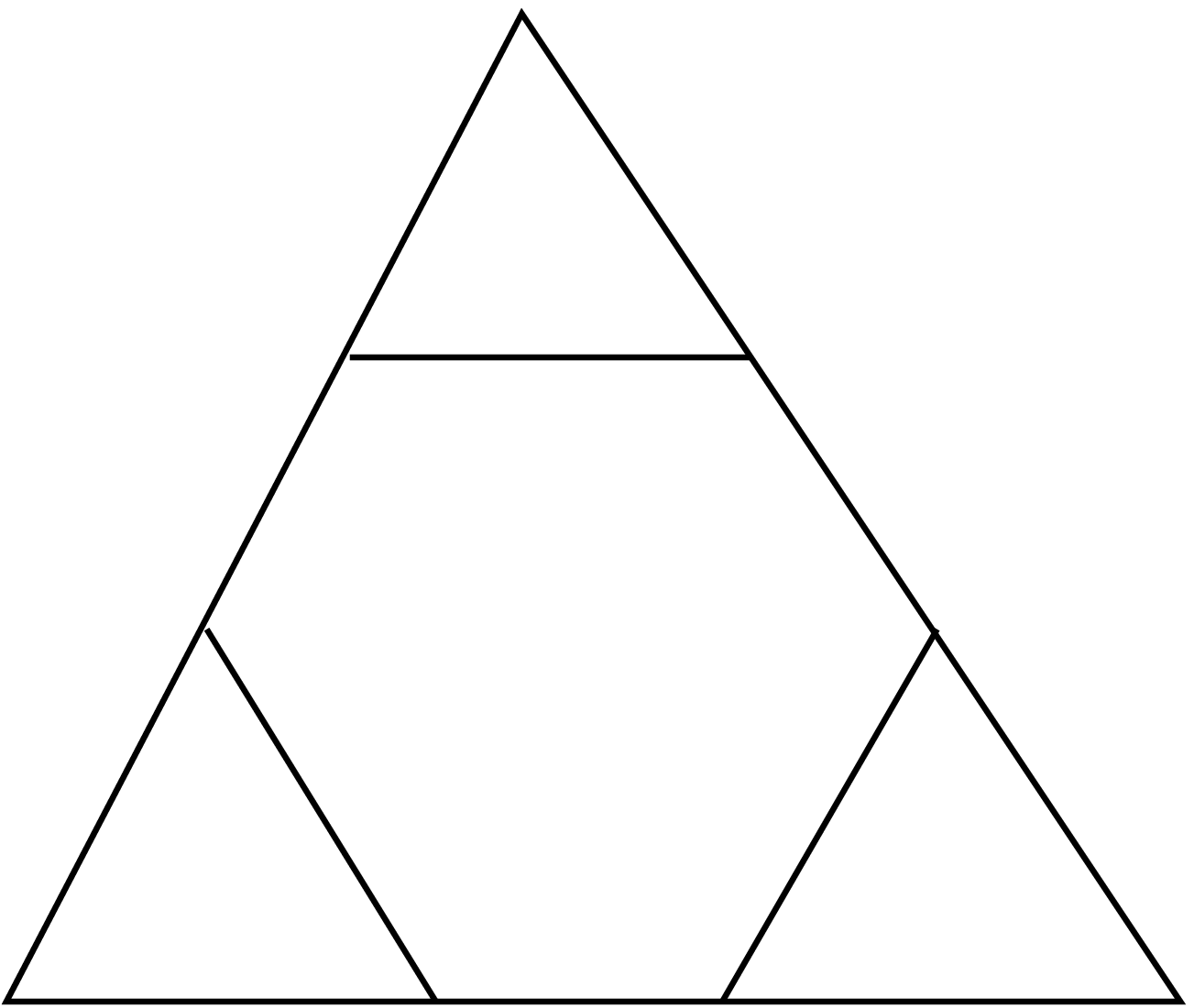}
\end{center}
\end{wrapfigure}
An arc $\alpha$ in a 2--simplex $\Delta$ is a \emph{normal arc} if  $\alpha\cap\partial\Delta=\partial\alpha,$ and in addition the endpoints of $\alpha$ are on distinct edges of $\Delta.$ A \emph{transversely oriented normal arc} $(\alpha,\nu_{\alpha})$ is a normal arc $\alpha$ in $\Delta$ together with a transverse orientation $\nu_{\alpha}$ to $\alpha$ in $\Delta.$ Two such arcs are equivalent if there is a homeomorphism of $\Delta$ to itself which preserves each edge of $\Delta$ and takes one arc with its transverse orientation to the other. There are $6$ equivalence classes.

A disc $D$ in a 3--simplex $\tau$ is a \emph{normal disc} if $D\cap\partial\tau=\partial D$ is a union of normal arcs, no two of which lie in the same face of $\tau.$ The disc $D$ is called a \emph{triangle} if the boundary consists of three normal arcs; otherwise the boundary has four normal arcs and $D$ is 
\begin{wrapfigure}{r}{50mm}
\begin{center}
  \includegraphics[width=4.5cm]{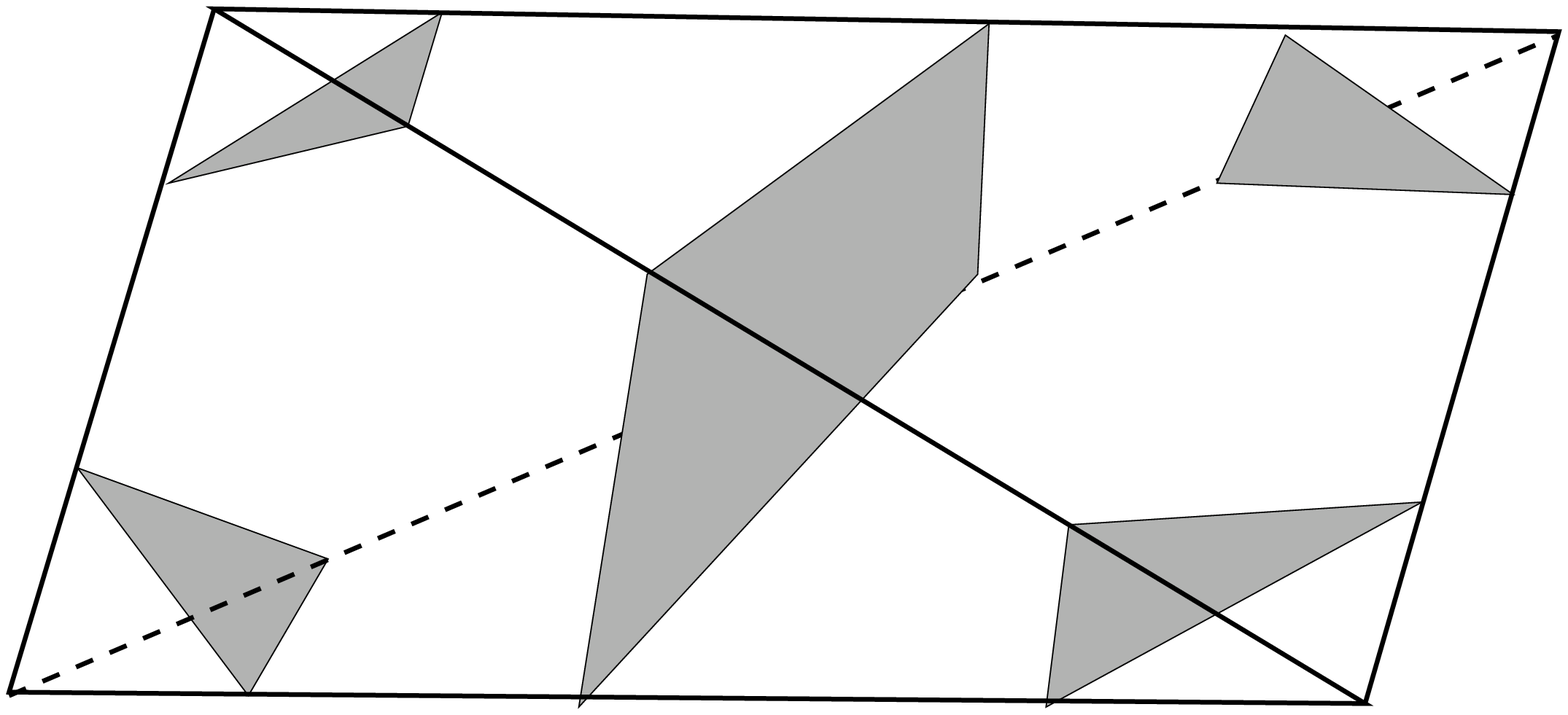}
\end{center}
\end{wrapfigure}
called a \emph{quad.} A \emph{transversely oriented normal disc}  $(D,\nu_D)$ is a normal disc $D$ together with a transverse orientation $\nu_D.$ Two such discs are equivalent if there is a homeomorphism of $\tau$ to itself which preserves each face of $\tau$ and takes takes one disc with its transverse orientation to the other. There are $14$ equivalence classes. The boundary of a transversely oriented normal disc is a collection of transversely oriented normal arcs; three for a triangle and four for a quad. Each  transversely oriented normal arc is contained in the boundary of exactly one equivalence class of transversely oriented normal triangles and one of transversely oriented normal quads in $\tau.$
 
Unless stated otherwise, $M$ denotes a closed, oriented, irreducible 3--manifold and $\tri$ a  triangulation of $M$ with $t$ 3--simplices. It is convenient to allow triangulations of $M$ which are more general than a simplicial triangulation. The interior of every simplex $\sigma$  in $\tri,$ of every dimension, is required to be embedded in $M$, but there may be self-identifications on the boundary of $\sigma.$ A triangulation is \emph{0--efficient} if every normal 2--sphere is \emph{vertex linking}, i.e.\thinspace it bounds a ball contained in a small neighbourhood of a vertex. Jaco and Rubinstein \cite{JR} showed that if $M$ is not homeomorphic to ${\RR}P^3$ or $L(3,1),$ then any minimal triangulation of $M$ is 0--efficient. A \emph{minimal triangulation} of $M$ has the property that no triangulation of $M$ contains fewer 3--simplices.

The \emph{transversely oriented normal disc space}  $ND^{\nu}(\tri)$ is the real vector space of dimension $14t$ with a basis consisting of the equivalence classes of the  transversely oriented normal discs in each 3--simplex of $\tri.$ A properly embedded surface $S\subset M$ is a \emph{normal surface} if the intersection of $S$ with every 3--simplex of $\tri$ is a collection of pairwise disjoint normal discs. If the normal surface $S$ has a transverse orientation, $\nu_S,$ then it determines a unique point $x^{\nu}(S,\nu_S)\in ND^{\nu}(\tri),$ where the coefficient of $(D,\nu_D)$ is the number of transversely oriented normal discs of that type in $S.$ The following result is routine:

\begin{proposition}(Incompressible is isotopic to normal)\label{normalize} 
Every closed, incompressible surface in a closed, irreducible, triangulated $3$-manifold is isotopic to a normal surface.
\end{proposition}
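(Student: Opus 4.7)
The plan is to place $S$ in general position with respect to the 2-skeleton $\tri^{(2)}$ and then reduce a well-chosen complexity by ambient isotopy until $S$ meets each 3-simplex in a pairwise disjoint collection of normal discs. I would set $w(S)=|S\cap\tri^{(1)}|$ and let $\ell(S)$ be the number of connected components of $S\cap\tri^{(2)}$; among all surfaces isotopic to $S$ and transverse to $\tri^{(2)}$, I would choose a representative (still called $S$) minimising the lexicographic pair $(w(S),\ell(S))$. The goal is then to prove that this minimiser is already normal.

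First I would argue $S\cap\tri^{(2)}$ contains no closed curves. An intersection circle $C$ lies on some 2-simplex $\sigma$, hence is null-homotopic in $M$, and by incompressibility it bounds a disc $D_S\subset S$. Choosing $C$ and $D_S$ so as to minimise $|\operatorname{int}(D_S)\cap\tri^{(2)}|$, a standard innermost-disc / outermost-arc selection on $D_S$ forces $\operatorname{int}(D_S)\cap\tri^{(2)}=\emptyset$. Then $D_\sigma\cup D_S$, where $D_\sigma$ is the disc in $\sigma$ bounded by $C$, is a 2-sphere and therefore bounds a ball by irreducibility; isotoping $D_S$ across this ball past $D_\sigma$ and slightly off reduces $\ell(S)$ without increasing $w(S)$, contradicting minimality.

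Next I would rule out \emph{bad arcs}: arcs of $S\cap\sigma$ (for $\sigma$ a 2-simplex) with both endpoints on the same edge of $\sigma$. Taking an outermost bad arc $\alpha$ on $\sigma$, it cobounds a sub-disc $E\subset\sigma$ with a sub-arc $\beta$ of an edge $e$, and outermost-ness together with Step~1 forces $E\cap S=\alpha$. A local isotopy of $S$ supported in a regular neighbourhood of $E$ in $M$ then pushes $\alpha$ across $\beta$ and decreases $w(S)$ by~$2$, again contradicting minimality.

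Finally, for each 3-simplex $\tau$ and each component $D$ of $S\cap\tau$, the set $D\cap\partial\tau$ is a disjoint union of normal arcs (by Steps~1 and~2). An innermost-curve and outermost-arc argument on $\partial\tau$, combined once more with incompressibility of $S$ and irreducibility of $M$, forces $\partial D$ to be a single closed loop with at most one arc on each face of $\tau$; hence $D$ is a disc whose boundary is the boundary of a normal triangle or normal quad, and $D$ itself can be isotoped inside the ball $\tau$ onto that normal disc. The main obstacle I anticipate is Step~2: the push across $E$ must be arranged so that the resulting surface is still transverse to $\tri^{(2)}$ and honestly has smaller weight, which requires some care because $\beta$ lies in the 1-skeleton and the isotopy simultaneously alters the intersections of $S$ with every 2-simplex meeting the edge $e$.
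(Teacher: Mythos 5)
The paper offers no proof of this proposition: it is labelled ``routine'' and the authors cite it as standard normal surface theory (Kneser--Haken normalisation). So the comparison is against the standard argument rather than against anything in the text. Your overall strategy --- minimise $(w(S),\ell(S))$ lexicographically, kill circles of $S\cap\tri^{(2)}$ via incompressibility and irreducibility, kill returning arcs via a weight-reducing $\partial$-compression, then argue each residual disc is normal --- is exactly the right one, and Steps~1 and~2 are essentially correct. Your worry at the end about Step~2 is legitimate but surmountable: one works in a product neighbourhood $e\times D^2$ of the edge, notes that $S$ meets it in meridian discs $\{p_i\}\times D^2$, and the isotopy across the bigon $E$ merges the two meridian discs at the endpoints of $\beta$ into a single annular tube parallel to $e$, lowering $w$ by $2$ and preserving transversality; outermostness of $\alpha$ (hence $\beta\cap S=\partial\beta$) is precisely what makes this clean.

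The genuine gap is in Step~3. After Steps~1 and~2 you know only that $S\cap\partial\tau$ is a union of embedded \emph{normal curves}; you do not yet know that each such curve has length~$3$ or~$4$. There are normal curves on the boundary of a tetrahedron of length $8,12,\dots$ which meet each face in more than one normal arc while every individual arc is still normal, so nothing in Steps~1--2 excludes them. Your claim that ``incompressibility of $S$ and irreducibility of $M$'' force $\partial D$ to meet each face at most once is not correct --- those hypotheses are irrelevant to this point, since $D$ is already a disc sitting in a ball. What is actually needed is another appeal to \emph{weight minimality}: one shows combinatorially that any normal curve of length $\ge 5$ on $\partial\tau$ crosses some edge $e$ at least twice, picks two crossings adjacent along $e$, and performs a $\partial$-compression of $D$ across the bigon in $\partial\tau$ cut off by the intervening subarc of $e$; this strictly decreases $w$, contradicting minimality. (Incompressibility \emph{is} the right tool for the separate sub-step of showing each component of $S\cap\tau$ is a disc with connected boundary, via an innermost curve on $\partial\tau$ plus a sphere-swap, but that is a different issue.) Without the long-normal-curve move your argument does not reach the conclusion; with it, it becomes the standard proof.
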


There is a linear subspace $NS^{\nu}(\tri)\subset ND^{\nu}(\tri)$  defined by the \emph{matching equations.} There is one matching equation for each equivalence class of transversely oriented normal arc $(\alpha,\nu_{\alpha})$ in each 2--simplex $\Delta$ of $\tri.$ The two sides of $\Delta$ in $M$ are labelled $+$ and $-$ arbitrarily. The matching equation is
$$t_-+q_-=t_+ + q_+.$$
Here $t_-$ (respectively $q_-$) is the coefficient of the equivalence class of transversely oriented normal triangles (respectively quads) on the $-$ side of $\Delta$ which contains  $(\alpha,\nu_{\alpha})$ in its boundary. Similarly for the $+$ side. This equation expresses that there are the same number of transversely oriented normal discs containing $(\alpha,\nu_{\alpha})$ in their boundary on either side of  $\Delta$. It follows that if $(S,\nu_S)$ is a transversely oriented normal surface in $M,$ then  $x^{\nu}(S,\nu_S)\in NS^{\nu}(\tri).$

\begin{proposition}(Branched immersions)\label{bins}
Every non-zero point in  $NS^{\nu}(\tri)$ with non-negative integral coordinates is represented by a transversely oriented normal branched immersion of a closed orientable surface. Conversely, a transversely oriented normal branched immersion of a closed orientable surface, $(f,\nu_f)\co S\rightarrow M,$ determines a unique point, $x^\nu(f,\nu_f),$ in $NS^{\nu}(\tri).$
\end{proposition}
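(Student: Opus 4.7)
The plan is to construct the branched immersion combinatorially by placing disc copies in each tetrahedron and gluing along $2$--simplices using the matching equations, and to handle the converse by a direct counting argument.

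For the forward direction, given a non-negative integer vector $x \in NS^{\nu}(\tri)$, I would take, for each $3$--simplex $\tau$ and each equivalence class $(D,\nu_D)$ of transversely oriented normal disc in $\tau$, exactly $x(D,\nu_D)$ disjoint abstract copies, all sharing a common reference embedding of $(D,\nu_D)$ in $\tau$ (in particular, placing the corners of every copy at prescribed interior points of the edges of $\tau$). Each copy has boundary consisting of three or four transversely oriented normal arcs. For each $2$--simplex $\Delta$ with an arbitrarily chosen $\pm$ labelling and each equivalence class $(\alpha,\nu_{\alpha})$, the matching equation $t_- + q_- = t_+ + q_+$ asserts that the total number of such boundary arcs on the two sides of $\Delta$ agree, so I would choose an arbitrary bijection and identify paired arcs via the unique arc homeomorphism respecting $\nu_{\alpha}$. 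Let $X$ denote the resulting quotient $2$--complex, equipped with its natural combinatorial map $f\co X \to M$ induced by the reference embeddings.

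The main obstacle is verifying that $X$ is a closed oriented $2$--manifold $S$ and that $f$ is a branched immersion. Interior points of discs and of identified arcs manifestly have $2$--manifold charts. At an edge point $p$ of the $1$--skeleton, the finitely many disc corners at $p$ each carry two emanating arc identifications (one in each $2$--simplex through the edge $e$ containing $p$), and since every arc has a unique partner, these identifications form a $2$--regular graph, hence a disjoint union of cycles. Each cycle collapses to a single point $\bar p \in X$ whose link in $X$ is a chain of disc sectors closing up into a disc chart. Thus $X$ is a surface; it is closed because every boundary arc has been paired, and orientable because the transverse orientations $\nu_D$ agree along every glued arc (they carry the same $\nu_{\alpha}$) and therefore assemble into a global transverse orientation $\nu_f$; combined with the orientation of $M$ this orients $S := X$. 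The map $f$ is then a local homeomorphism on each chart but multiple sheets may coincide over the $1$--skeleton, which is precisely the branching. By construction $x^\nu(f,\nu_f) = x$.

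For the converse, given a normal transversely oriented branched immersion $(f,\nu_f)\co S \to M$, I would define the $(D,\nu_D)$--coordinate of $x^\nu(f,\nu_f)$ to be the number of components of $f^{-1}(\operatorname{int}\tau)$ whose closure $f$ carries to the class $(D,\nu_D)$ preserving $\nu_D$; this is a non-negative integer assignment, and uniqueness is immediate. The matching equation at $(\alpha,\nu_{\alpha})$ in a $2$--simplex $\Delta$ then holds because each preimage arc of a representative of $(\alpha,\nu_{\alpha})$ in $S$ has a neighborhood crossing $\Delta$ transversally and so abuts $\Delta$ from the $-$ side in exactly the same number of ways as from the $+$ side; counting these in two ways gives the two sides of the matching equation, so $x^\nu(f,\nu_f) \in NS^{\nu}(\tri)$.
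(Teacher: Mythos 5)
Your proof is correct and takes essentially the same approach as the paper: glue abstract copies of the transversely oriented normal discs along their boundary arcs using the matching equations, observe the quotient is a closed surface mapping into $M$ as a branched immersion, and use the matching transverse orientations together with the orientability of $M$ to orient $S.$ You spell out slightly more detail than the paper -- the $2$-regular graph argument at edge points of the $1$-skeleton, and the explicit count for the converse (which the paper simply calls obvious) -- but the underlying construction is the same.
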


The \emph{normal surface space} is $NS^{\nu}_+(\tri) = NS^{\nu}(\tri)\cap[0,\infty)^{14t}.$ It is a cone on the compact convex polytope ${\cal C}=NS^{\nu}_+(\tri)\cap V,$ where $V$ is the affine subspace consisting of all points whose coordinates sum to one. Whence ${\cal C}$ is the intersection of a simplex of dimension $14t-1$ with a linear subspace; it is called the \emph{projective solution space} and its extreme points are termed \emph{vertices}. Given a transversely oriented normal surface, we would like to know its Euler characteristic as well as which homology class it represents. This information is given by the linear maps described in the following two results.

\begin{lemma}($\chi^*=\chi$ for immersions)\label{chistar} 
There is a linear map 
$\chi^*\co NS^{\nu}(\tri)\to{\Bbb R}$
with the property that if $(f,\nu_f)\co S\rightarrow M$ is a  transversely oriented normal (unbranched) immersion, then $\chi^*(x^\nu(f,\nu_f))=\chi(S).$
 \end{lemma}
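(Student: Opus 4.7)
The plan is to define $\chi^*$ on each basis vector of $ND^{\nu}(\tri)$ as a local combinatorial quantity independent of the transverse orientation, and then to verify the lemma by computing $\chi(S)=V-E+F$ from the CW structure on $S$ induced by $f$. Explicitly, for a transversely oriented normal disc $(D,\nu_D)$ with $k(D)\in\{3,4\}$ boundary arcs and with corners lying on edges $e_1,\ldots,e_{k(D)}$ of $\tri$, I would set
\[
\chi^*((D,\nu_D)) \;=\; 1 \;-\; \frac{k(D)}{2} \;+\; \sum_{i=1}^{k(D)} \frac{1}{\deg(e_i)},
\]
where $\deg(e)$ denotes the number of $3$--simplices of $\tri$ containing the edge $e$, and extend linearly to $ND^{\nu}(\tri)$.

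To verify this, given an unbranched transversely oriented normal immersion $(f,\nu_f)\co S\to M$ with weights $w_D=x^{\nu}(f,\nu_f)_D$, I would equip $S$ with the CW decomposition whose $0$--cells, $1$--cells and $2$--cells are respectively the points of $f^{-1}(\tri^{(1)})$, the normal arcs in $S$, and the normal discs in $S$. The face count is immediate: $F=\sum_D w_D$. The edge count follows from the matching equations, which, for each transversely oriented normal arc type on each $2$--face of $\tri$, balance the total weight of discs on the two sides containing that arc in their boundary; consequently every $1$--cell of $S$ is a boundary arc of exactly two $2$--cells, giving $E=\tfrac12\sum_D w_D\, k(D)$.

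The substantive step is the vertex count, and it is the one place the unbranched hypothesis enters. Pick a $0$--cell $v\in S$ and let $e$ be the edge of $\tri$ containing $f(v)$. Since $f$ is an unbranched immersion, a small disc-neighbourhood of $v$ in $S$ is embedded transversely across $e$; the $\deg(e)$ two-faces of $\tri$ incident to $e$ cut this small disc into exactly $\deg(e)$ sectors, one inside each $3$--simplex around $e$, each sector being the corner of one normal disc. Thus the valence of $v$ in this CW structure equals $\deg(e)$. Summing valences over the $0$--cells lying over $e$ counts the total number of disc-corners on $e$, so
\[
|\,S^{(0)}\cap f^{-1}(e)\,| \;=\; \frac{1}{\deg(e)} \sum_{\substack{D,\, c\,\text{a corner}\\ \text{of }D\text{ on }e}} w_D,
\]
and summing over the edges of $\tri$ yields $V=\sum_D w_D\sum_c \tfrac{1}{\deg(e_c)}$. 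Combining with the face and edge counts gives $\chi(S)=V-E+F=\chi^*(x^{\nu}(f,\nu_f))$. At a branch point the local valence is an integer multiple of $\deg(e)$ rather than $\deg(e)$ itself, which is precisely why the formula is claimed to compute $\chi$ only on unbranched immersions.
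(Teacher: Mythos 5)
Your proof is essentially the paper's: the same local formula $1 - k(D)/2 + \sum 1/\delta$ is used to define $\chi^*$ on basis discs, and the verification is the same computation of $\chi(S) = V - E + F$ from the induced cell decomposition, with the unbranched hypothesis serving precisely to guarantee that each $0$--cell has valence exactly $\deg(e)$. One small caveat: since the paper allows triangulations with self-identifications on simplex boundaries, it defines the degree of an edge $e$ as the number of \emph{corners} of $3$--simplices incident to $e$ (so a single $3$--simplex can contribute several times), rather than the number of $3$--simplices containing $e$; your argument goes through verbatim with that corrected count.
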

 
The equality in the above lemma does not hold for \emph{branched} immersions. 

If $f\co S\rightarrow M$ is a normal immersion of a closed oriented surface $S,$ then the orientation on $M$ determines an induced transverse orientation for the immersion, $\nu_f.$ Conversely, given a transversely oriented normal branched immersion of a closed surface $S,$ the transverse orientation and the orientation on $M$ determine an orientation of $S.$ This observation leads to the following:

\begin{proposition}(Homology map) \label{homologymap}
There is a surjective homomorphism 
$$h\co NS^{\nu}(\tri)\to H_2(M;{\Bbb R}),$$ 
called the \emph{homology map}, with the following property:
If $(f,\nu_f)\co S\rightarrow M$ is a transversely oriented normal branched immersion, then $f_*([S]) = h(x^{\nu}(f,\nu_f)),$ where $S$ is given the induced orientation.
\end{proposition}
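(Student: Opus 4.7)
The plan is to define $h$ at the chain level. For each basis vector $(D,\nu_D)$ of $ND^\nu(\tri)$, the orientation of $M$ together with the transverse orientation $\nu_D$ orients the disc $D$; after triangulating $D$ (one 2--simplex if $D$ is a triangle, two if $D$ is a quad) this produces a singular 2--chain $c_{(D,\nu_D)}$ in $M$. After fixing, for each equivalence class of transversely oriented normal arc in each 2--simplex, a reference singular 1--simplex in $M$, we may arrange that $\partial c_{(D,\nu_D)}$ is a signed sum of these reference arcs. Extending linearly yields $\tilde h\co ND^\nu(\tri)\to C_2(M;{\Bbb R})$; we will set $h(x)=[\tilde h(x)]\in H_2(M;{\Bbb R})$ for $x\in NS^\nu(\tri)$.

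The crucial step is that the matching equations amount exactly to the cycle condition $\partial\tilde h(x)=0$. Fix a transversely oriented normal arc class $(\alpha,\nu_\alpha)$ in a 2--simplex $\Delta$. By the property stated at the end of the disc definition, each 3--simplex adjacent to $\Delta$ contains exactly one triangle and one quad whose boundary includes $(\alpha,\nu_\alpha)$, so only four weights $t_\pm,q_\pm$ contribute to the reference 1--simplex of $(\alpha,\nu_\alpha)$ in $\partial\tilde h(x)$. A short right-hand rule calculation shows that a disc on the $+$ side of $\Delta$ and a disc on the $-$ side, both having $\nu_\alpha$ as their transverse orientation on $\alpha$ in $\Delta$, induce \emph{opposite} boundary orientations on the reference arc in $M$: flipping from $\tau_+$ to $\tau_-$ reflects the outward boundary normal across $\Delta$ while keeping $\nu_\alpha$ fixed, and together with the ambient orientation this flips the induced orientation of $\alpha$. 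Hence the coefficient of the reference arc in $\partial\tilde h(x)$ is $\pm[(t_-+q_-)-(t_++q_+)]$, which vanishes precisely when the matching equation at $(\alpha,\nu_\alpha)$ holds. So $\tilde h$ carries $NS^\nu(\tri)$ into $Z_2(M;{\Bbb R})$, making $h$ well defined. The identity $f_*[S]=h(x^\nu(f,\nu_f))$ is then immediate: a transversely oriented normal branched immersion endows $S$ with a CW structure whose oriented 2--cells push forward under $f$ to the oriented normal discs in $M$ that sum to $\tilde h(x^\nu(f,\nu_f))$.

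For surjectivity, every class $\xi\in H_2(M;{\Bbb Z})$ is Poincar\'e dual to an element of $H^1(M;{\Bbb Z})\cong[M,S^1]$ and hence is represented by the preimage of a regular value under a map $M\to S^1$, giving an embedded oriented surface $\Sigma$. Since $M$ is irreducible, any 2--sphere components of $\Sigma$ bound balls and may be discarded without changing $[\Sigma]$; successive compressions then yield an embedded oriented incompressible surface $\Sigma'$ with $[\Sigma']=\xi$. By Proposition \ref{normalize}, $\Sigma'$ is isotopic to a normal surface, placing $\xi$ in $\im(h)$. Since $\im(h)$ is an ${\Bbb R}$--linear subspace of $H_2(M;{\Bbb R})$ containing the lattice $H_2(M;{\Bbb Z})$, it equals all of $H_2(M;{\Bbb R})$. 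The main obstacle is the orientation bookkeeping in the matching-equation step; once consistent sign conventions are fixed, everything else is routine.
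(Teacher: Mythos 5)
Your proposal follows the paper's proof closely: define $h$ at the chain level by using $\nu_D$ and the ambient orientation to orient each normal disc and treating it as a singular 2--chain, observe that the matching equations force the cycle condition, and obtain surjectivity by isotoping an embedded representative of each integral class into normal position and then spanning over ${\Bbb R}$. You spell out the orientation-flip computation behind the cycle condition and the compression/irreducibility reduction needed to invoke Proposition~\ref{normalize}, details the paper dismisses as ``easy to see'' and defers to \cite{CT}, but the construction and strategy are the same.
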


A point $x\in NS^{\nu}_+(\tri)$ is \emph{admissible} if at most one quad type appears in each $3$-simplex (though both orientations are allowed). Two points $x,y\in NS^{\nu}_+(\tri)$ are \emph{compatible} if $x+y$ is admissible. 

\begin{theorem}(Thurston norm via normal surfaces)\label{maintheorem}
Let $M$ be a closed, orientable, irreducible, atoroidal 3--manifold with simplicial or 0--efficient triangulation $\tri.$ Let $\mathcal{C}$ be the projective solution space and $B$ be the convex hull of the finite set of points
$$\left\{\frac{v}{|\chi^*(v)|} : v \text{ is a vertex of } \mathcal{C} \text{ which is admissible and satisfies } \chi^*(v) <  0\right\}.$$
Then $h(B)$ is the unit ball, ${\cal B},$ of the Thurston norm on $H_2(M;{\RR}).$ In particular, ${\cal B}$ has at most $2^{14t}$ vertices.
\end{theorem}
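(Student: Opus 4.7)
The plan is to prove $h(B)=\mathcal{B}$ by two inclusions. Both rely on the principle that an admissible nonnegative integer point of $NS^{\nu}_+(\tri)$ represents an embedded transversely oriented normal surface, and that an extreme ray of $NS^{\nu}_+(\tri)$ corresponds to a \emph{connected} such surface (since any disconnection $S=S_1\sqcup S_2$ decomposes the coordinate vector).

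For $h(B)\subseteq\mathcal{B}$, by convexity of $\mathcal{B}$ it suffices to show each generator $v/|\chi^*(v)|$ of $B$ maps into $\mathcal{B}$. Scaling $v$ to the primitive integer point on its extreme ray, $v$ represents an embedded connected normal surface $S$ with $\chi(S)=\chi^*(v)<0$ by Lemma \ref{chistar}, so $S$ has genus $\geq 2$. Then $\chi_{-}(S)=-\chi(S)=|\chi^*(v)|$, giving $\|h(v)\|_T\leq|\chi^*(v)|$ and $h(v/|\chi^*(v)|)\in\mathcal{B}$.

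For $\mathcal{B}\subseteq h(B)$, take an integer class $\beta\neq 0$ and pick a Thurston-norm minimizing embedded representative $S$ of $\beta$. Using irreducibility we may discard $2$-sphere components, and using atoroidality together with irreducibility we may compress-and-discard torus components, without changing $[S]$ or $\chi_{-}(S)$; so we may assume $S$ has neither, whence $\chi(S)=-\|\beta\|_T$. Isotope $S$ to normal by Proposition \ref{normalize}, giving $v=x^{\nu}(S,\nu_S)$ with $h(v)=\beta$ and $\chi^*(v)=-\|\beta\|_T$ (Lemma \ref{chistar}). Decompose $v$ in the cone as $v=\sum_i c_i v_i$ with $c_i\geq 0$ and each $v_i$ a vertex of $\mathcal{C}$; each $v_i$ is automatically admissible, because the quad types with positive weight in $v_i$ form a subset of those in $v$. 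The crucial geometric step is that this decomposition realizes disjointly: after clearing denominators, $c_i>0$ implies that $S$ (or a stack of parallel copies of $S$) contains the fundamental surface $S_i$ of $v_i$ as a disjoint component, by the compatibility theory for transversely oriented normal surfaces. Since $S$ has no $2$-sphere and no torus components, no $v_i$ with $c_i>0$ has $\chi^*(v_i)\geq 0$ (the only admissible connected orientable normal surfaces with $\chi\geq 0$ are spheres and tori); so every such $v_i$ is good, and
$$\frac{\beta}{\|\beta\|_T}=\sum_i\frac{c_i|\chi^*(v_i)|}{\|\beta\|_T}\cdot\frac{h(v_i)}{|\chi^*(v_i)|}$$
is a convex combination of $h$-images of vertices of $B$ (the coefficients are nonnegative and sum to $-\chi^*(v)/\|\beta\|_T=1$), so $\beta/\|\beta\|_T\in h(B)$. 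The set of such rational classes is dense in $\partial\mathcal{B}$ and $h(B)$ is closed, so $\partial\mathcal{B}\subset h(B)$. Reversing transverse orientations negates $h$ and preserves $\chi^*$, so $B$, and hence $h(B)$, is symmetric about $0$ and in particular contains $0$; combined with $\partial\mathcal{B}\subset h(B)$ and convexity, this yields $\mathcal{B}\subseteq h(B)$.

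The main obstacle is the Haken-type decomposition step together with its geometric realization, namely that the algebraic inequality $v\geq v_i$ forces $S_i$ to appear as a disjoint component of (parallel copies of) $S$. This requires the full compatibility and exchange theory for transversely oriented normal surfaces, which the paper develops in its technical ``gaps'' section. Once $h(B)=\mathcal{B}$ is established, the vertex bound $|\mathrm{vertices}(\mathcal{B})|\leq 2^{14t}$ is immediate: each vertex of $\mathcal{B}$ is the $h$-image of a vertex of $B$, which is a vertex of $\mathcal{C}\subset[0,\infty)^{14t}$, and such vertices are determined by the subset of the $14t$ coordinates at which they vanish.
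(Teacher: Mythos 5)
Your argument for $h(B)\subseteq\mathcal{B}$ contains a real gap. You assert that an admissible nonnegative integer point of $NS^{\nu}_+(\tri)$ represents an \emph{embedded} transversely oriented normal surface. This is true in the unoriented setting (Proposition~\ref{unoriented}), but fails in the transversely oriented setting, which is exactly why the paper includes Proposition~\ref{admimm} with the weaker conclusion ``immersed'': prescribing a transverse orientation pins each normal disc near the sub-simplex $\sigma(D,\nu_D)$ it cuts off, and two large triangles of different types, or a large triangle meeting a quad's $\sigma$, are forced to intersect. So scaling a vertex $v$ of $\mathcal{C}$ to an integer point only yields an immersion, and you cannot read off $\|h(v)\|\le|\chi^*(v)|$ directly from the definition of the Thurston norm (which minimizes over \emph{embedded} representatives). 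The paper closes this gap by invoking Gabai's theorem that the singular norm equals the embedded norm; without that, your inequality is unjustified.

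The argument for $\mathcal{B}\subseteq h(B)$ has a more serious flaw. You claim, as the ``crucial geometric step,'' that if $v=\sum_i c_i v_i$ with $v_i$ a vertex of $\mathcal{C}$ and $c_i>0$, then after clearing denominators the fundamental surface $S_i$ of $v_i$ appears as a \emph{disjoint component} of a stack of parallel copies of $S$. This is false: a geometric sum of normal surfaces is obtained by cut-and-cross-join along the intersection curves and is in general topologically unrelated to the disjoint union. Nothing in ``compatibility theory'' produces disjoint components from an algebraic decomposition of coordinate vectors. Because of this, your conclusion that every $v_i$ with $c_i>0$ satisfies $\chi^*(v_i)<0$ does not follow, and neither does the exclusion of the $\chi^*(v_i)=0$ case. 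The paper's route is genuinely different and is the heart of its technical content: it introduces the notion of \emph{algebraic asphericity}, proves that the coordinate vector of an embedded aspherical normal surface is algebraically aspherical (Proposition~\ref{admissibleaspherical} for a $0$--efficient triangulation, using the fact that normal spheres are vertex-linking; or Proposition~\ref{lw-taut=>aspherical} via Tollefson--Wang for a simplicial triangulation), and then uses Lemma~\ref{asphs}(2) to push this down to each vertex $v_i$, giving only $\chi^*(v_i)\le 0$. The residual case $\chi^*(v_i)=0$ is then handled by a separate argument (the smallest integer multiple gives a connected immersed surface of zero Euler characteristic, hence $h(v_i)=0$ by atoroidality), a step your proposal does not anticipate. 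Your proposal also never engages with why the hypothesis ``simplicial or $0$--efficient'' is needed --- it is needed precisely for the algebraic-asphericity propositions that your disjointness claim was meant to replace.
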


\begin{algorithm}(Fibred faces)\label{algo:fibring}
Let $M$ be a closed, orientable, irreducible, atoroidal 3--manifold. The following is an algorithm to determine the fibred faces of $\mathcal{B}$ and hence to determine whether $M$ fibres over the circle: 

(0) We may assume that $M$ is given via a (simplicial or 0--efficient) triangulation. (1) Compute the unit norm ball of the Thurston norm using the above theorem. (2) Determine the top-dimensional faces. (3) For each top dimensional face $\mathcal{F}$ of $\mathcal{B}:$ (3a) Let $\hat{\mathcal{F}}$ be the barycentre of $\mathcal{F}$ and let $\alpha\in H_2(M;{\Bbb Z})$ be the smallest integral multiple of $\hat{\mathcal{F}}.$  (3b) Find an embedded, norm minimising, transversely oriented normal surface $S$ without sphere or torus components representing $\alpha.$ (3c) Use Haken's algorithm to check whether $\overline{M \setminus S}$ is homeomorphic to $S \times [0,1];$ see \cite{M}. Then $\mathcal{F}$ is a fibred face if and only if Step (3c) yields an affirmative answer. Moreover, $M$ fibres over the circle if and only if at least one face is fibred.
\end{algorithm}

This paper gives a self-contained proof of Theorem \ref{maintheorem} for a 0--efficient triangulation. In the case of a simplicial triangulation, work by Tollefson and Wang \cite{TW} is used to prove Theorem \ref{maintheorem}. An alternative approach to construct ${\cal B}$ is given in \cite{TW}, Algorithm 5.9, and used by Schleimer \cite{S} to obtain an algorithm to determine whether a 3--manifold fibres over the circle. The algorithm to compute ${\cal B}$ given here is practical in the sense that the authors hope to implement it on a computer.

 
 \subsection*{Proofs, neat position and algebraically aspherical solutions}
 
 \begin{proof}[Proof of Proposition \ref{bins}]
A non-zero point in  $NS^{\nu}(\tri)$ with non-negative integral coordinates determines an abstract collection of normal discs. Since the integers satisfy the matching equations, we may identify the edges of these discs in pairs to produce a closed surface $S$ such that there is a map of $S$ into $M$ mapping each disc in $S$ to a normal disc in $M.$ This map is an immersion on the complement of the set, $V,$ of vertices of $S,$ and thus a \emph{branched immersion.} The transverse orientations on the discs match along edges, so this map extends to  a map of $S\times[-1,1]$ into $M$ which is an immersion on the complement of $V\times[-1,1].$ The map of $S$ therefore has a transverse orientation and, since $M$ is orientable, it follows that $S$ is orientable. The converse direction is obvious. 
\end{proof}
 
 \begin{proof}[Proof of Lemma \ref{chistar}]
A \emph{corner} of a $3$-simplex is a small neighborhood of an edge. The \emph{degree} of a 1--simplex $e$ in $\tri$ is the number of corners of $3$-simplices in $\tri$ which contain $e.$ If all these $3$-simplices are embedded this is just the number of $3$-simplices containing $e.$ We define $\chi^*$ on each transversely oriented normal disc $(D,\nu_D)$ as follows. For each corner $v\in\partial D$ let $\delta(v)$ denote the degree of the 1--simplex in $\tri$ that contains $v.$ Then
$$ \chi^*(D,\nu_D) = 1 - \frac{\#(\partial D)}{2}\ +\ \sum_{v\in\partial D}\ \frac{1}{\delta(v)},$$
where $\#(\partial D)$ is the number of sides of $\partial D$ (three for a triangle and four for a quad). A normal immersion of a surface $S$ gives it a cell decomposition into normal triangles and quads. Every 1--cell appears in two 2--cells, and every 0--cell $v$ appears  in $\delta(v)$ 2--cells. Summing the formula then gives the usual formula for the Euler characteristic for $S$ using this cell decomposition.  
\end{proof}
 
\begin{proof}[Proof of Proposition \ref{homologymap}]
Denote by $C_2(M)$ the 2--dimensional singular chain group of $M$ with coefficients ${\Bbb R}.$ A homomorphism $\overline{h}:ND^{\nu}(\tri)\rightarrow C_2(M)$ is first constructed. Suppose $(D,\nu_D)$ is a  transversely oriented normal disc in some $3$-simplex of $\tri.$ The orientation of $M$ and the transverse orientation $\nu_D$ determine an orientation $\mu_D$ of $D$. If $D$ is a normal triangle then $(D,\mu_D)$ may be regarded as a singular 2--simplex. If $D$ is a normal quad, we add a diagonal to subdivide $D$ into two triangles and obtain the sum of two singular $2$-simplices. This defines $\overline{h}$ on a basis and we extend linearly. It is easy to see that if $x\in NS^{\nu}(\tri)$ then $\overline{h}(x)$ is a singular 2--cycle. See \cite{CT} for details. We define $h(x) = [\overline{h}(x)].$ It is also easy to see that if $(f,\nu_f):S\rightarrow M$ is a transversely oriented branched normal immersion then $f_*([S])=h(x^{\nu}(f,\nu_f)).$ The map $h$ is surjective since every homology class is represented by an embedded, closed, oriented surface which may be isotoped to be normal.
\end{proof}

We obtain the theory of normal surfaces if we forget transverse orientations. Thus $ND(\tri)$ is the vector space of dimension $7t$ with basis the equivalence classes of normal discs in each $3$-simplex of $\tri.$ There is one un-oriented matching equation for each equivalence class of normal arc in each $2$-simplex in $\tri.$ The solutions of the matching equations give a linear subspace $NS(\tri)\subset ND(\tri),$ and $NS_+(\tri)=NS(\tri)\cap[0,\infty)^{7t}.$  A point in $NS_+(\tri)$ is \emph{admissible} if in each $3$-simplex the coefficient of at most one quad type is non-zero.    
 
We now the describe the \emph{geometric sum} of two embedded normal surfaces. Suppose $D,D'$ are two normal discs in a $3$-simplex $\tau$ which intersect transversely in an arc $\alpha.$ After cutting $D$ and $D'$ along $\alpha$  there are two possible ways to cross-join to obtain two disjoint discs. There is  exactly one way which yields disjoint normal discs unless $D$ and $D'$ are quads of different types. In that case there is no cross join which gives disjoint normal discs. The direction of the cross join is determined by the way the normal arcs intersect in the boundary. Now suppose $S$ and $F$ are two normal surfaces. We may isotope them so they intersect transversely and, for every 3--simplex $\tau\in \tri,$ the intersection of a normal disc in $S\cap \tau$ with one in $F\cap\tau$ is either empty or a single arc.  If $S$ and $F$ are \emph{compatible}, i.e.\thinspace the sum of their normal coordinates is admissible, then there is a unique way to cut and cross join $S$ and $F$ along $S\cap F$ to obtain a new (possibly not connected) normal surface denoted $S+F.$

 The pair $(S,F)$ is \emph{disc reduced} if no curve of intersection in $S \cap F$ bounds a disc in both $S$ and $F.$ An innermost discs argument shows that $S$ and $F$ can be replaced by normal surfaces $S'$ and $F'$ homeomorphic to (but possibly not normally isotopic to) $S$ and $F$ respectively such that $(S',F')$ is disc reduced and $S'+F'$ is normally isotopic to $S+F.$ An ambient isotopy of $M$ is a \emph{normal isotopy} if it preserves every cell in the triangulation $\tri.$ The following is well known: 

\begin{proposition}(Admissible implies unique embedded normal surface)\label{unoriented} 
Suppose $M$ is a closed, triangulated $3$-manifold. There is a one--to--one correspondence between admissible non-negative integral points  in $NS(\tri)$ and normal isotopy classes of \emph{embedded} normal surfaces in $M.$ Furthermore, addition of compatible points corresponds to geometric sum of compatible normal surfaces.
\end{proposition}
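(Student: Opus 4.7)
The plan is to prove the bijection by giving explicit local constructions in both directions, and then derive the geometric sum statement from them.

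The easy direction sends an embedded normal surface $S$ to its coordinate vector, whose entry at $(D,\nu_D)$ counts the discs of type $D$ appearing in $S\cap \tau$ as $\tau$ ranges over the $3$-simplices. Non-negativity and integrality are immediate; the matching equations hold because each normal arc type on a $2$-simplex is counted with equal multiplicity from the two adjacent $3$-simplices; and admissibility holds because any two distinct quad types in a single $3$-simplex must intersect in a properly embedded arc in its interior, which is incompatible with being a subset of an embedded surface.

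For the converse, I would realise any admissible non-negative integral $x\in NS(\tri)$ by an embedded normal surface, working locally. On each $2$-simplex, the three normal arc types each cut off a distinct corner, so arcs of different types can be drawn disjointly by packing parallel copies of each type into thin collars about the respective vertices. This produces a canonical disjoint arc system on the face whose normal isotopy class is determined by the arc counts alone. The matching equations then guarantee that the arc patterns on the two sides of each face agree, so the boundary pattern on each $3$-simplex $\tau$ is determined by $x$. Inside $\tau$, the admissibility condition (at most one quad type) is exactly what allows the prescribed discs to be embedded disjointly: parallel triangles of each type are stacked in thin layers hugging the corresponding vertex, and the quads, all of a single type, are placed in the equatorial region between the four triangle stacks. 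Gluing these local pictures along the $2$-simplices yields a closed embedded normal surface realising $x$.

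Uniqueness up to normal isotopy follows because, within each $3$-simplex, the isotopy class of a disjoint collection of normal discs (fixing the cell structure) is determined by the multiplicities of each type. For the geometric sum statement, given compatible embedded normal surfaces $S$ and $F$, I would verify simplex-by-simplex that after reducing to disc reduced position the unique admissible cut-and-cross-join along $S\cap F$ produces a surface whose disc counts in each $3$-simplex equal $x(S)+x(F)$; compatibility is precisely what ensures no two conflicting quad types appear. The main obstacle is the local existence-and-uniqueness claim inside a single tetrahedron: that any admissible multiset of triangle and quad types, of arbitrary multiplicities, can be realised by a disjoint family of normal discs, and that such a realisation is unique up to normal isotopy. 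This is the core combinatorial content of Haken's theory and requires a careful analysis of the intersection patterns among the seven normal disc types in a $3$-simplex.
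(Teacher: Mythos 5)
Your outline matches the paper's strategy in spirit (build the surface locally, simplex by simplex, and argue uniqueness up to normal isotopy at each stage), and the easy direction is handled correctly. The problem is that the step you yourself flag as ``the main obstacle'' --- existence and uniqueness of a disjoint family of normal discs in a single tetrahedron realising a prescribed admissible multiset of disc types --- is exactly where the proof lives, and you leave it as an acknowledged gap, suggesting it needs ``a careful analysis of the intersection patterns among the seven normal disc types.'' The paper shows that no such case analysis is needed, and the route it takes is worth internalising because it is both shorter and more robust.

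The paper's observation is to shift the problem from discs to boundary curves. From $x$ one reads off a finite set of points on each $1$-simplex of $\tri$; these points determine normal arcs on each $2$-simplex, and a family of disjoint normal arcs on a $2$-simplex with prescribed endpoints is unique up to normal isotopy (this is the only elementary step). Concatenating arcs across the four faces of a $3$-simplex $\tau$ gives a collection $C(x,\tau)$ of disjoint simple closed curves on $\partial\tau\cong S^2$, determined up to normal isotopy by $x$. Now one invokes the general fact that a disjoint union of simple closed curves on the boundary $2$-sphere of a $3$-ball bounds, up to isotopy rel boundary, a \emph{unique} disjoint family of embedded discs in the ball. This supplies both existence and uniqueness of the disc family at one stroke, with no need to enumerate intersection patterns among disc types. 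The only remaining thing to check is that these discs are \emph{normal}, and a disc is normal iff its boundary curve meets each face of $\tau$ at most once; that, in turn, holds for all curves in $C(x,\tau)$ iff $x$ is admissible. Admissibility is thus not just what allows an explicit construction to go through (your ``triangles near the vertices, quads in the equator'' picture), it is \emph{equivalent} to the boundary curves being normal, which is a cleaner and more conceptual characterisation and simultaneously proves the converse implication (an inadmissible $x$ produces some curve meeting a face twice, hence cannot bound a family of normal discs).

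Two smaller points. First, your uniqueness sentence (``the isotopy class of a disjoint collection of normal discs \ldots is determined by the multiplicities of each type'') asserts the very thing you then call the main obstacle; as written the proof is circular on this point. Second, for the geometric-sum statement you do not need to pass to disc-reduced position before computing coordinates: the regular exchange along each arc of intersection inside a $3$-simplex replaces two intersecting normal discs by two disjoint normal discs of the \emph{same} pair of types, so the coordinate count is preserved term by term; compatibility is only needed to guarantee that such an exchange exists (i.e.\ that no two conflicting quad types ever meet). Disc reduction enters the paper for a different purpose and is orthogonal to this computation.
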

\begin{proof} 
Assume for simplicity that every simplex of $\tri$ is embedded. The general case only involves more words. A point $x\in NS_+(\tri)$ determines a collection of normal discs in each $3$-simplex of $\tri.$ These in turn determine a collection of points on each $1$-simplex of $\tri.$ These points determine a collection of normal arcs in each $2$-simplex of $\tri.$ Up to normal isotopy there is a unique way to embed these arcs in each $2$-simplex. Thus up to normal isotopy $x$ determines a unique set of disjoint simple closed curves, $C(x,\tau),$ in the boundary of each $3$-simplex $\tau\in\tri.$ Up to normal isotopy there is a unique set of disjoint discs in $\tau$ with boundary $C(x,\tau).$ These discs are normal discs if and only if each simple closed curve in $C(x,\tau)$ meets each face of $\tau$ at most once. This happens if and only if $x$ is admissible. If $x$ is admissible these normal discs are identified along their boundaries to produce a normal surface $S$ which is thus unique up to normal isotopy. 
\end{proof}

The corresponding result in the transversely oriented setting is weaker:

\begin{proposition}(Admissible implies immersed transversely oriented normal)\label{admimm} 
If $M$ is a closed, orientable, triangulated 3--manifold, then every admissible non-negative integral point in $NS^{\nu}(\tri)$ is the coordinate of a transversely oriented (unbranched) normal  immersion of a closed oriented surface.
\end{proposition}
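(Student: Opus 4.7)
The plan is to realise the immersion directly, using the embedded surface provided by Proposition~\ref{unoriented} as a scaffold.  Forgetting transverse orientations, the projection $\pi(x)\in NS(\tri)$ is admissible in the unoriented sense, because admissibility of $x$ already forces at most one underlying quad type per tetrahedron.  Proposition~\ref{unoriented} therefore furnishes an embedded normal surface $F\subset M$ with $x(F)=\pi(x)$; in particular, the link of every vertex of $F$ on a $1$-simplex $e$ is a single circle wrapping exactly once around $e$.

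Next I would colour the discs of $F$ with transverse orientations so as to realise $x$.  Inside each $3$-simplex the $x_{(D,+)}+x_{(D,-)}$ nested parallel copies of each unoriented disc type $D$ in $F$ are to be coloured $+$ or $-$ so that exactly $x_{(D,+)}$ of them become $(D,+)$ and $x_{(D,-)}$ become $(D,-)$.  The surface $S$ and map $f\colon S\to M$ are then obtained by taking one abstract oriented disc for each coloured copy and gluing these abstract discs along each $2$-simplex of $\tri$ by matching coloured arcs in the nested order inherited from the embedding of $F$.  Because every edge of $S$ is glued along an embedded arc of $F$, the link of each vertex $v\in S$ lying over a $1$-simplex $e$ factors through the link of the corresponding vertex of $F$; the latter wraps exactly once around $e$, and hence so does the former.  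Consequently $f$ is an unbranched transversely oriented normal immersion with $x^{\nu}(f,\nu_f)=x$, as needed.

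The main technical obstacle is showing that the per-$3$-simplex colouring can be chosen globally so that the embedded nested matching along every $2$-simplex respects the transverse orientations induced from its two sides.  In each $3$-simplex the colouring is free, and the oriented matching equations satisfied by $x$ guarantee that the counts of $+$ and $-$ coloured arcs agree on the two sides of every $2$-simplex; but producing a single coherent global colouring realising these agreeing local counts requires an inductive or cohomological propagation argument along a dual spine of the triangulation.  This propagation, which uses the full strength of $x\in NS^{\nu}(\tri)$ beyond its projection $\pi(x)\in NS(\tri)$, is the heart of the proof.
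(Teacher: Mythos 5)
Your approach differs from the paper's and contains a genuine gap, which you flag yourself but which is in fact fatal rather than merely technical. You build $S$ by assigning transverse orientations to the discs of the \emph{embedded} surface $F = F(\pi(x))$ and then gluing $+$ arcs to $+$ arcs and $-$ arcs to $-$ arcs in nested order. For the map $f\colon S \to M$ (sending each abstract disc to the corresponding disc of $F$) even to be continuous, the two arcs you identify across each $2$--simplex $\Delta$ must be the \emph{same} arc of $F$ in $\Delta$; otherwise the two discs of $S$ glued along that edge are sent to discs of $F$ that are not adjacent in $M$. This forces the colouring to be constant on every arc of $F\cap\Delta$, i.e.\ locally constant on $F\setminus\tri^{(1)}$. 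Since $F\setminus\tri^{(1)}$ is $F$ minus a finite set of points, that locally constant function is constant on each component of $F$ — so a ``coherent colouring'' is nothing other than a transverse orientation of $F$ itself, and there are only $2^{k}$ of these ($k$ the number of two-sided components, none for one-sided components). There is no reason the given admissible $x\in NS^{\nu}(\tri)$ should lie among those $2^{k}$ points of $NS^{\nu}(\tri)$, and in general it does not; this is precisely why Proposition~\ref{admimm} asserts only an \emph{immersion}. The ``inductive or cohomological propagation along a dual spine'' you allude to cannot rescue this, because the obstruction is topological (the colouring would have to descend to a transverse orientation of $F$), not a matter of carefully ordering the bookkeeping. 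Your assertion that ``the link of each vertex of $S$ factors through the link of the corresponding vertex of $F$'' presupposes exactly the coherence that fails.

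The paper avoids the issue entirely by not using $F$ at all: it places the transversely oriented disc types in each tetrahedron in a \emph{canonical neat position} determined by the transverse orientation alone (each disc is pushed toward the subsimplex $\sigma(D,\nu_D)$ singled out by its coorientation, each arc toward $\sigma(\alpha,\nu_\alpha)$, each point on a $1$--simplex toward the end it points at). In this position, discs and arcs of the same transversely oriented type are nested and parallel while discs and arcs of \emph{different} types are allowed to cross; the oriented matching equations then guarantee, with no global coherence argument, that the neatly positioned boundary arcs on the two sides of every $2$--simplex coincide in number and position and so glue up. The price is self-intersections — one gets an immersion rather than an embedding — but the vertex links are manifestly simple circles, so the immersion is unbranched. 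If you want to keep your scaffold idea, you would need to abandon $F$'s positions and move to positions where each colour class is separately nested, at which point you have essentially rediscovered neat position.
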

\textbf{Proof  } A non-negative admissible integral point $x^{\nu}\in NS^{\nu}(\tri)$ determines a collection of transversely oriented normal discs in each $3$-simplex. We embed each of these discs in the simplex in a canonical \emph{neat position}.  The boundary arcs of different discs may intersect but only in the interior of $2$-simplices in $\tri,$ and \emph{canonical} means that the normal discs in adjacent $3$-simplices have boundary arcs that match pairwise with matching transverse orientations to give a transversely oriented immersed normal surface realising $x^{\nu}.$

First we describe \emph{neat position.}  Suppose $(\alpha,\nu_{\alpha})$ is a transversely oriented normal arc in a $2$-simplex $\Delta.$  The transverse orientation is thought of as a function on the  
\begin{wrapfigure}{r}{37mm}
\begin{center}
  \includegraphics[width=35mm]{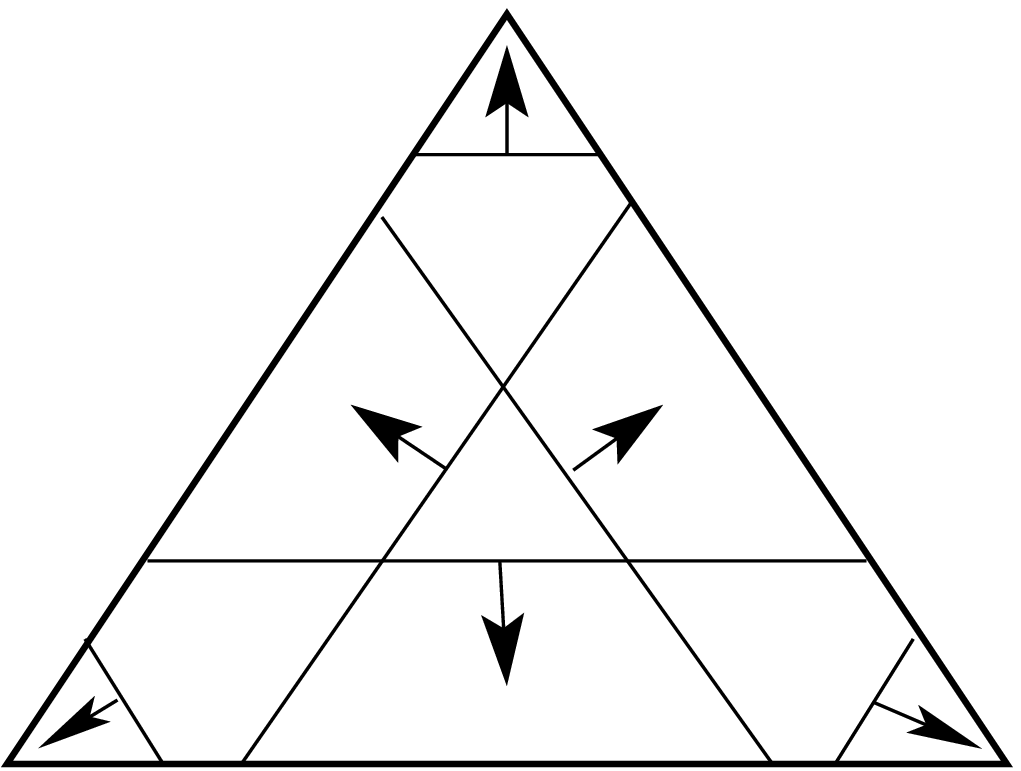}
\end{center}
\end{wrapfigure}
components of $\Delta\setminus \alpha$ which sends one component to $+1$ and the other to $-1.$ Denote the former by $C_+$ and let $\sigma$ denote the maximal sub-simplex of $\Delta$ contained in $C_+.$ There are $6$ such simplices and they are in bijective correspondence with the equivalence classes of $(\alpha,\nu_{\alpha}).$ The arc is called \emph{short} if $\sigma$ is a 0--simplex. Otherwise $\sigma$ is a 1--simplex and the arc is called \emph{long.} A family of transversely oriented arcs is in \emph{neat position} if each arc is straight, two arcs intersect only in the interior of $\Delta$, and the only arcs which intersect are long arcs of different types.

A family of transversely oriented normal discs in a $3$-simplex $\tau$ is in \emph{neat position} if the intersection of the boundary of the family with each $2$-simplex $\Delta\subset\partial\tau$ is in neat position and each normal disc $D$ is the cone on $\partial D$ from the barycentre of the vertices of $\partial D.$ Thus $D$ is determined by the intersection points of $\partial D$ with the $1$-skeleton $\tri^1.$ Hence in neat position every triangle is flat. A transversely oriented normal triangle is \emph{small} if the boundary consists of three short arcs and otherwise the boundary consists of three long arcs and the triangle is called \emph{large.} It follows that in neat position, two triangles intersect if and only if they are both large and of different un-oriented types.

Suppose $(D,\nu_D)$ is a transversely oriented normal disc in a $3$-simplex $\tau.$ As above, a component of $\tau\setminus D$ is labelled $C_+$ using $\nu_D.$ Let $\sigma(D,\nu_D)$ denote the maximal sub-simplex of $\tau$ contained in $C_+.$ There are $14$ such simplices and they are in 1-1 correspondence with the equivalence classes of transversely oriented normal discs. If a family of discs is in neat position, then, after an ambient isotopy, every disc $(D,\nu_D)$ in the family is in a small neighborhood of $\sigma(D,\nu_D).$ 

A triangle is  \emph{small} if $\sigma$ is a $0$-simplex and \emph{large} if $\sigma$ is a 2--simplex. For a quad $\sigma$ is always a 1--simplex and $D$ will be chosen to be a long thin rectangle very close to $\sigma.$ In neat position, if two discs intersect then they are either two large triangles of different types, or a quad $(D,\nu_D)$ and a large triangle which meets $\sigma(D,\nu_D).$ 

Suppose we are given a collection of transversely oriented normal discs in a 3--simplex $\tau$ which is admissible: only one quad type appears but both transverse orientations are allowed. We claim that up to normal isotopy there is a unique way to position these discs in neat position in $\tau.$

For existence, it suffices to show that it is possible to place in neat position one copy of each transversely oriented normal triangle in $\tau$ together with two normal quads
\begin{wrapfigure}{l}{55mm}
\begin{center}
  \includegraphics[width=5cm]{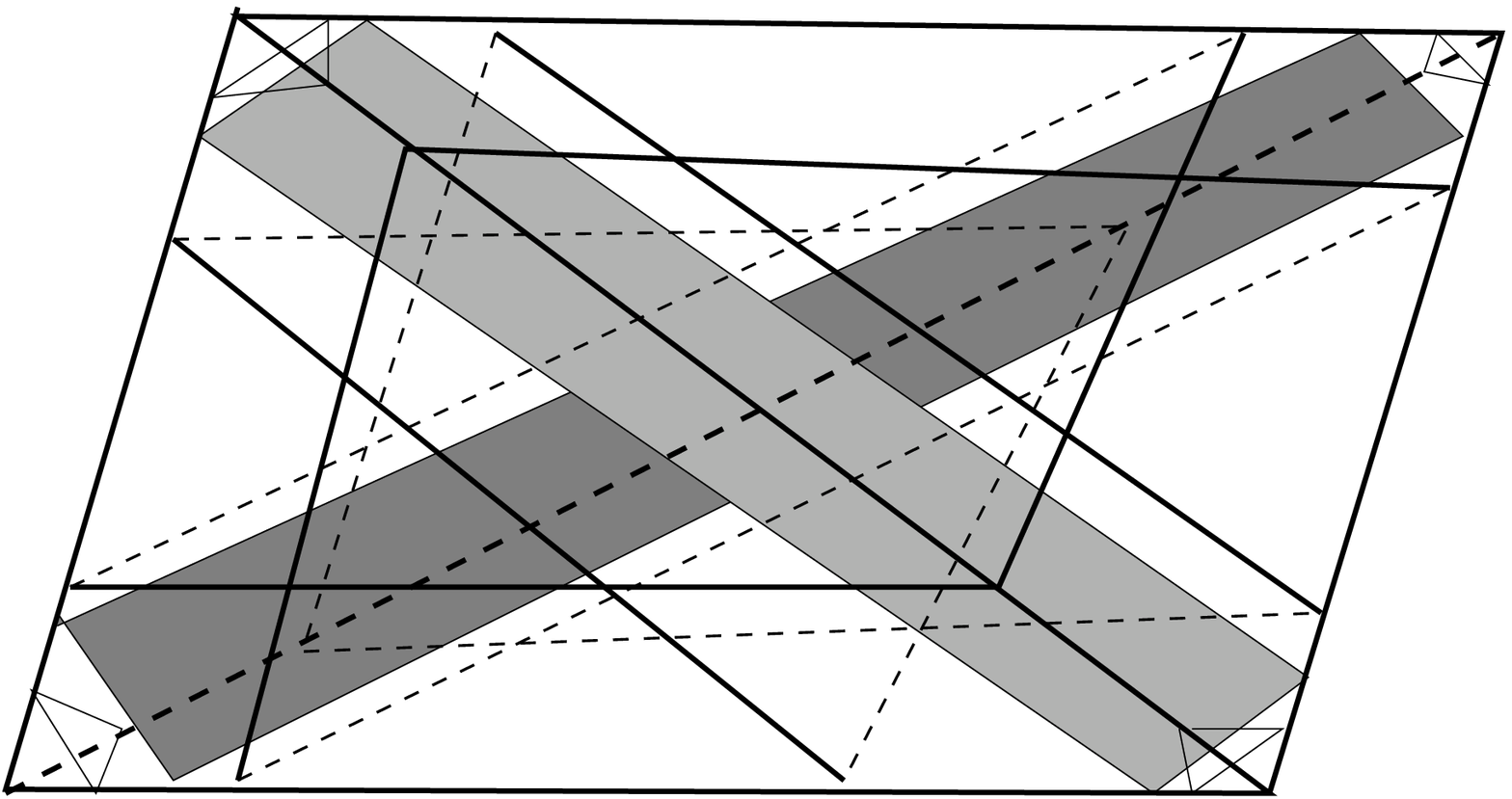}
\end{center}
\end{wrapfigure}
which differ only in transverse orientation. For, having done this, one may take closely spaced parallel copies of these discs. The figure shows how to place the discs. First place the large triangles in neat position. Next place each transversely oriented quad in a small neighborhood of the 1--simplex of $\sigma$ it determines, so that it is disjoint from the large triangles it can be made disjoint from. Finally place the small triangles in small neighbourhoods of the vertices disjoint form all other discs. This proves existence.

Uniqueness follows from the fact that the family of neat arcs in each $2$-simplex $\Delta$ of $\sigma$ is determined up to normal isotopy by their endpoints, and the fact that neat discs are determined by their boundaries.  This completes the claim.

We now describe \emph{canonical} neat position for admissible solutions.  A vector $x^{\nu}$ in $NS_+(\tri)$ determines, for each 1--simplex $e$ of $\tri,$ a finite number of transversely oriented points on $e.$ We first position these points on $e$ in \emph{neat position} which just means  close to the end of $e$ determined by the transverse orientation on that point, and equally spaced apart.  The vector $x^{\nu}$ also determines a family of transversely oriented normal arcs types in each $2$-simplex of $\tri.$  By the previous discussion, there is a unique way to place these normal arcs in neat position with endpoints the chosen neatly positioned points.
Since $x^{\nu}$ is admissible, there is then a unique way to place the transversely oriented normal disc types determined by $x^{\nu}$ in each 3--simplex of $\tri$ into neat position so the boundary is the set of neatly positioned arcs. This is the canonical neat position. 

Since $M$ is orientable and the immersed surface constructed above is transversely orientable it is also orientable.\qed

An orientable surface $S$ is \emph{aspherical} if no connected component of $S$ is a sphere. We wish to have an algebraic criterion for deciding whether a transversely oriented normal surface is aspherical depending only on its transversely oriented normal coordinates.
There is a partial order $\ge$ on $ND^{\nu}(\tri),$ where $x\ge y$ if and only if every coordinate of $x$ is greater than or equal to the corresponding coordinate of $y.$

\begin{definition*}
A point $x \in NS^{\nu}_+(\tri)$ is \emph{algebraically aspherical} if for all $y \in NS^{\nu}_+(\tri)$ such that $x\ge y,$ we have $\chi^*(y) \le 0.$
\end{definition*}

\begin{lemma}(Properties of aspherical elements)\label{asphs}
The following properties follow directly from the definition.
\begin{enumerate}
\item\label{properties:non-pos} If $x \in NS^{\nu}_+(\tri)$ is algebraically aspherical, $\chi^*(x) \le 0.$
\item\label{properties:sum of alg irr} If $x= \sum x_i$ is algebraically aspherical and $x_i \in NS^{\nu}_+(\tri)$ for each $i,$ then each $x_i$ is algebraically aspherical.
\item\label{properties:multiplicative} If $x \in NS^{\nu}_+(\tri)$ is algebraically aspherical, so is $\alpha x$ for any $\alpha > 0.$
\end{enumerate}
\end{lemma}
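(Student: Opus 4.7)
The plan is that all three properties are essentially book-keeping consequences of the definition, as the statement itself indicates. I would handle them in the order given, using only the partial order $\ge$, the linearity of $\chi^*$, and the fact that $NS^{\nu}_+(\tri)$ is a convex cone closed under addition. There is no real obstacle here; the only small subtlety is remembering that the test vector $y$ in the definition must lie in $NS^{\nu}_+(\tri)$, which forces one to check that each $y$ we construct is both non-negative and solves the matching equations.

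For (1), the trick is simply to take $y = x$ in the definition of algebraically aspherical. Since $x \in NS^{\nu}_+(\tri)$ and trivially $x \ge x$, the definition yields $\chi^*(x) \le 0$.

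For (2), suppose $x = \sum_i x_i$ is algebraically aspherical with each $x_i \in NS^{\nu}_+(\tri)$. Fix an index $j$ and let $y \in NS^{\nu}_+(\tri)$ satisfy $x_j \ge y$. Write
\[
x - y \;=\; (x_j - y) \,+\, \sum_{i \neq j} x_i,
\]
which is a sum of vectors with non-negative coordinates, so $x \ge y$. Applying the hypothesis that $x$ is algebraically aspherical gives $\chi^*(y) \le 0$, and hence $x_j$ is algebraically aspherical.

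For (3), suppose $x \in NS^{\nu}_+(\tri)$ is algebraically aspherical and $\alpha > 0$. Let $y \in NS^{\nu}_+(\tri)$ with $\alpha x \ge y$. Because $NS^{\nu}_+(\tri)$ is a cone (it is the intersection of the linear subspace cut out by the matching equations with the non-negative orthant), $y/\alpha \in NS^{\nu}_+(\tri)$, and dividing by $\alpha > 0$ preserves the coordinate-wise inequality, so $x \ge y/\alpha$. Hence $\chi^*(y/\alpha) \le 0$, and by linearity of $\chi^*$ from Lemma~\ref{chistar} we get $\chi^*(y) = \alpha\,\chi^*(y/\alpha) \le 0$, so $\alpha x$ is algebraically aspherical.
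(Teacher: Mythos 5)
Your proof is correct and is exactly the routine verification the paper intends when it says the properties ``follow directly from the definition''; the paper provides no further detail, so your write-up matches the intended approach.
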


\begin{proposition}(embedded aspherical in 0--efficient $\Rightarrow$ algebraically aspherical)\label{admissibleaspherical} 
Let $M$ be a closed, oriented 3--manifold with 0--efficient triangulation $\tri.$ If $(F, \nu_F)$ is an embedded transversely oriented normal surface in $M$ and no component of $F$ is a sphere, then $x^\nu(F, \nu_F)$ is admissible and algebraically aspherical.
\end{proposition}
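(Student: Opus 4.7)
The plan is as follows. Admissibility of $x := x^\nu(F, \nu_F)$ is immediate: since $F$ is embedded, the normal discs of $F$ in any single 3-simplex are pairwise disjoint, and two normal quads of different un-oriented types in a 3-simplex always intersect, so at most one un-oriented quad type can have positive coefficient in $x$ per 3-simplex.

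For algebraic asphericity, take $y \in NS^{\nu}_+(\tri)$ with $y \le x$. Using continuity of $\chi^*$ and density of rationals in the rational polytope $\{y' \in NS^{\nu}(\tri) : 0 \le y' \le x\}$, one may assume $y$ is rational, and then by passing to $ny$ and $nx$ (the latter is the coordinate of $n$ disjoint parallel copies of $F$, still embedded and still without sphere components) one may assume $y$ is integral. Define $|y|_D := \sum_{\nu_D} y_{(D, \nu_D)}$; summing the transversely oriented matching equations over transverse orientations of each arc gives the un-oriented matching equations, so $|y| \in NS_+(\tri) \cap \mathbb{Z}^{7t}$. Since $|y| \le |x|$ and $|x|$ is admissible in the un-oriented sense, so is $|y|$, and by Proposition \ref{unoriented} there is an embedded normal surface $G \subset M$ realising $|y|$. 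The formula in Lemma \ref{chistar} for $\chi^*(D, \nu_D)$ depends only on $D$, hence $\chi^*(y) = \sum_D |y|_D\, \chi^*(D) = \chi(G)$ by the Euler characteristic computation in that lemma. It therefore suffices to prove $\chi(G) \le 0$.

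Suppose for contradiction that $G$ has a sphere component $\Sigma$. By 0--efficiency, $\Sigma$ is vertex-linking at some vertex $v$, so its normal coordinate is exactly one triangle-at-$v$ disc per corner of $v$ and zero elsewhere. Since the coordinates of the component $\Sigma$ are bounded coordinate-wise by those of $G$, hence by those of $F$, the surface $F$ must contain at least one normal triangle at $v$ in every corner of $v$. The key combinatorial step is to show that the innermost such triangle at $v$ in each corner, taken across all corners at $v$, glues up to form a vertex-linking 2--sphere component of $F$: embeddedness of $F$ forces the triangles at $v$ in each 3--simplex at $v$ to be parallel and nested toward $v$ with any other normal discs of that 3--simplex lying on the far side, so along each 2--simplex incident to $v$ the $F$-arc of type $\alpha^v$ closest to $v$ is bounded on both sides by an innermost triangle at $v$. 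These innermost triangles glue into the vertex link of $v$ sitting inside $F$, contradicting the hypothesis that $F$ has no sphere component. I expect this innermost-triangle gluing analysis, which is where embeddedness of $F$ is used in an essential way (in contrast with Proposition \ref{admimm}, whose canonical neat position produces only an immersion), to be the main technical obstacle.
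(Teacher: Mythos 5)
Your reduction to an un-oriented statement is essentially the same first half as the paper's proof: you forget the transverse orientation via $\varphi$, use admissibility and Proposition~\ref{unoriented} to realize $\varphi(y)$ by an embedded normal surface $G$, and use the fact (stated in the paper's proof) that $\chi^*$ factors through a linear $\chi^*_{un}$ on $ND(\tri)$ agreeing with Euler characteristic for embedded surfaces. After that your route diverges, and there are two genuine gaps.

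First, an unacknowledged logical gap. Having reduced to showing $\chi(G)\le 0$, you begin ``Suppose for contradiction that $G$ has a sphere component.'' But $\chi(G)>0$ only forces $G$ to have a component of positive Euler characteristic, which a priori could be an $\RR P^2$ rather than a sphere; normal surfaces in an orientable manifold may well be one-sided. The paper closes this hole with a separate parity argument: the boundary $S'$ of a regular neighbourhood of an $\RR P^2$ component $S$ is a normal sphere with $x(S')=2x(S)$, which must be vertex-linking by 0--efficiency, forcing an odd coordinate of $x(S')$ to equal $2\cdot(\text{integer})$, a contradiction. Your proof needs this (or an equivalent) step.

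Second, the acknowledged gap is where your approach departs most from the paper's, and it is where you stand to lose the most. You attempt to show directly, by an innermost-triangle analysis, that if $x(F)$ dominates the coordinate of a vertex-linking sphere then $F$ itself contains such a sphere component. This statement is in fact true, but verifying it requires care (e.g.\ ruling out that the innermost $v$-cutting normal arc in a face is the boundary arc of a quad rather than of a triangle at $v$, which needs a geometric argument that quads cannot sit between the innermost triangle and the vertex), and you explicitly leave this as ``the main technical obstacle.'' The paper sidesteps this entirely: since $nx = (nx - x(S)) + x(S)$ is a sum of compatible admissible integral solutions, Proposition~\ref{unoriented} gives $F(nx)=n\cdot F$ on one hand and, since the vertex-linking sphere $S$ may be normally isotoped into an arbitrarily small neighbourhood of the vertex disjoint from $F(nx-x(S))$, gives $F(nx)$ as the disjoint union $F(nx-x(S))\sqcup S$ on the other. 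Uniqueness then forces $n\cdot F$ to contain a copy of $S$, a contradiction. This is shorter and avoids re-deriving a special case of the geometric-sum machinery by hand. If you want to pursue your route, you must both complete the gluing argument and add the $\RR P^2$ exclusion; otherwise the argument via Proposition~\ref{unoriented} is cleaner.
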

\begin{proof}
Since $F$ is embedded $x^\nu=x^\nu(F, \nu_F)$ is admissible. Forgetting transverse orientations gives a linear map $\varphi \co NS^\nu(\tri) \to NS(\tri),$ and we write $x=\varphi(x^{\nu}).$  The topology of the embedded normal surface $F$ is determined (up to normal isotopy) by $x.$ If $x^\nu$ is not algebraically aspherical, we will show that there is an integer $n>0$ such that $n x>y,$ where $y=x(S)$ for some embedded normal sphere $S.$ Then
$$n x = (n x- y) + y,$$
where $nx, (nx-y), y\in NS_+(\tri)$ are compatible admissible non-negative integral solutions to the unoriented matching equations. By Proposition \ref{unoriented}  for each admissible $z\in NS_+(\tri)$ there is an embedded normal surface $F(z)$ in $M$ with normal coordinates $z,$ and this surface is unique up to normal isotopy. Thus $F(y)$ is normally isotopic to $S.$ Since  $\tri$ is 0--efficient $S$ is a vertex linking sphere and so may be isotoped into a small neighborhood of that vertex disjoint from $F(nx-y).$  Let $n\cdot F$ denote an embedded normal surface consisting of $n$ parallel disjoint copies of the normal surface $F.$ The uniqueness part of Proposition \ref{unoriented} implies $F(nx) = n\cdot F.$  The above equation implies $n\cdot F$ is normal-isotopic to the geometric sum of $F(nx-x(S))$ and $S.$ Since these surfaces are disjoint, their geometric sum equals their disjoint union. Thus $n\cdot F$ contains a sphere normally isotopic to $S,$ which is a contradiction. 

It remains to prove that if $x^{\nu}$ is not algebraically aspherical, then $S$ exists. If $x^{\nu}$ is not algebraically  aspherical, then there is $a^{\nu}\in NS^{\nu}_+(\tri)$ with $x^{\nu}\ge a^{\nu}$ and $\chi^*(a^{\nu})>0.$ We may assume $a^{\nu}$ has non-negative rational coordinates. Then there is an integer $n>0$ so that $b^{\nu}=na^{\nu}$ has integer coordinates and $nx^{\nu}\ge b^{\nu}.$ Now $\chi^*(b^{\nu})=n\chi^*(a^{\nu})>0.$ Since $x^{\nu}$ is admissible, $b^{\nu}$ is admissible and hence $b=\varphi(b^{\nu})\in NS_+(\tri)$ is admissible. The map $\chi^*$ factors through a linear map $\chi_{un}^*:ND(\tri)\rightarrow {\Bbb R}$ with the property that if $A$ is an embedded normal surface, then $\chi_{un}^*(x(A))=\chi(A).$ So $\chi^*(b^{\nu})=\chi^*_{un}(\varphi(b^\nu))$ and since $F(b)$ is an embedded normal surface, $\chi(F(b))=\chi_{un}^*(b)=\chi^*(b^{\nu}).$ Hence $\chi(F(b))>0.$ Thus $F(b)$ contains a component $S$ with $\chi(S)>0.$ Since $S\subset F(b)$ are normal surfaces it follows that $b\ge x(S).$ If $S={\Bbb R}P^2$ then, since $M$ is orientable, a small regular neighborhood of $S$ is bounded by a 2--sphere $S',$ and $x(S')=2x(S).$ Since $\tri$ is 0--efficient, $S'$ is a vertex linking sphere. But this implies that $x(S)$ is not an integral solution, giving a contradiction. Thus $S$ is a sphere and $nx\ge b\ge x(S).$
 \end{proof}

If $S$ is a closed surface, then $-\chi_-(S)$ is the sum of the Euler characteristic over all connected components of $S$ with negative Euler characteristic (see \cite{Thu1986}). If $c\in H_2(M),$ then the \emph{Thurston norm} of $c$ is
$$||c||\ =\ \inf_{S: [S]=c}\ \chi_-(S).$$
If $M$ is irreducible, discarding sphere components from  $S$ does not change $[S]$ or $\chi_-(S)$ and so the infimum can be taken over surfaces containing no spheres.

A transversely oriented normal surface $(F, \nu_F)$ in $M$ is called a \emph{taut normal surface} if it is aspherical and $\chi(F) = - || h(x^\nu(F, \nu_F)) ||,$ and it is \emph{least weight taut} if $F$ minimises $|F \cap \tri^{(1)}|$ over all taut normal surfaces representing $h(x^\nu(F, \nu_F)).$ The following is a re-formulation of (part of)  Lemma 3.2 of Tollefson and Wang \cite{TW}.

\begin{lemma}[Tollefson--Wang \cite{TW}]\label{lem:TW}
Let $M$ be a closed, oriented 3--manifold with simplicial triangulation $\tri.$ Let $(F, \nu_F)$ be least weight taut. Let $G,$ $H$ be orientable normal surfaces such that $m x(F) = x(G) + x(H)$ for some positive integer $m$ and the pair $(G,H)$ is disc reduced. Then the transverse orientation of $F$ induces unique transverse orientations $\nu_G$ and $\nu_H$ on $G$ and $H$ respectively  such that $x^\nu(F, \nu_F) = x^\nu(G, \nu_G) + x^\nu(H, \nu_H)$ and $(G, \nu_G)$ and $(H, \nu_H)$ are least weight taut.
\end{lemma}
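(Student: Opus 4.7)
My plan is to split the argument into three pieces: construct the lifts $\nu_G$ and $\nu_H$ of $\nu_F$, verify that $(G,\nu_G)$ and $(H,\nu_H)$ are taut, and finally verify that each is least weight.

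For the first piece, I would use Proposition \ref{unoriented}: the admissible vector $m x(F) = x(G) + x(H)$ is realised, uniquely up to normal isotopy, both by $m$ parallel disjoint copies of $F$ (call this $m\cdot F$) and by the geometric sum $G + H$ (which exists since $(G,H)$ is disc reduced), so these agree as embedded normal surfaces. After the regular exchange along $G \cap H$, each normal disc of $G$ or of $H$ is a union of sub-discs lying inside specific parallel copies of $F$, and I would assign it the transverse orientation that these sub-discs inherit from $\nu_F$. I would verify well-definedness by noting that adjacent sub-discs of a single normal disc of $G$ (separated by an arc of $G \cap H$) inherit matching transverse orientations: the local cross-join picture around each intersection arc admits a unique compatible labelling. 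Summing disc-by-disc then gives the identity $m\, x^\nu(F,\nu_F) = x^\nu(G,\nu_G) + x^\nu(H,\nu_H)$ in $NS^\nu(\tri)$ (I suspect the statement as written is missing this factor $m$), and uniqueness is immediate because any other coherent lift would have to agree on each disc.

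For tautness, I would combine the linearity of $h$ (Proposition \ref{homologymap}) and of $\chi^*$ (Lemma \ref{chistar}) with Thurston's triangle inequality. Set $\alpha = h(x^\nu(G,\nu_G))$ and $\beta = h(x^\nu(H,\nu_H))$, so $\alpha + \beta = m[F]$ and hence $||\alpha|| + ||\beta|| \ge m||[F]||$. On the other hand, $\chi(G) + \chi(H) = m\chi(F) = -m||[F]||$ using tautness of $F$. Combined with the obvious inequalities $\chi(S) \le -\chi_-(S)$ and $\chi_-(S) \ge ||h(x^\nu(S,\nu_S))||$ valid for any closed orientable transversely oriented normal surface $S$, we obtain
\[
-m||[F]|| \;=\; \chi(G) + \chi(H) \;\le\; -\chi_-(G) - \chi_-(H) \;\le\; -||\alpha|| - ||\beta|| \;\le\; -m||[F]||,
\]
so equality holds throughout. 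The first equality rules out components of $G$ and $H$ with positive Euler characteristic; since these surfaces are closed and orientable, no component is a sphere, giving asphericity. The second equality is tautness.

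The hardest piece will be least weight, and I would argue by contradiction. Suppose there is a least weight taut $(G',\nu_{G'})$ with $h(x^\nu(G',\nu_{G'})) = \alpha$ and $|G' \cap \tri^{(1)}| < |G \cap \tri^{(1)}|$. The plan is to combine $G'$ with $H$ via a disc-reducing regular exchange (activated by the simplicial hypothesis on $\tri$, which enables the Jaco--Rubinstein exchange machinery used by Tollefson and Wang) to produce an embedded normal surface representing $m[F]$ of weight strictly less than $m|F \cap \tri^{(1)}|$, and then to extract from it a component representing $[F]$ whose weight is strictly less than $|F \cap \tri^{(1)}|$, contradicting least weight tautness of $F$. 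The main obstacle is the extraction of a component in the specific class $[F]$: this uses the observation, forced by the equality case above, that $\alpha$, $\beta$, and $[F]$ all lie in the closed cone over a single top-dimensional face of $\mathcal{B}$ on which $\chi_-$ is linear, so that tautness behaves additively under decompositions into this cone. I would defer the technical details of the exchange to \cite{TW}.
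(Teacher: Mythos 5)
The paper does not prove this lemma: it states it as a reformulation of Lemma~3.2 of Tollefson and Wang and offers the citation to \cite{TW} as the entire justification, so there is no ``paper's proof'' to compare against. Your overall framework (construct the induced orientations, verify tautness via linearity of $\chi^*$ and the Thurston norm triangle inequality, verify least weight by a contradiction using a regular exchange) is the right shape, and you correctly spotted what appears to be a typo: since applying the orientation-forgetting map $\varphi$ to $x^\nu(F,\nu_F)=x^\nu(G,\nu_G)+x^\nu(H,\nu_H)$ would give $x(F)=x(G)+x(H)$ rather than the hypothesised $m\,x(F)=x(G)+x(H)$, the intended identity must be $m\,x^\nu(F,\nu_F)=x^\nu(G,\nu_G)+x^\nu(H,\nu_H)$.

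However, there is a genuine sign error in your tautness step. Since $-\chi_-(S)$ sums the Euler characteristic only over components with $\chi<0$, and the remaining (sphere) components contribute $+2$ each, the correct inequality for a closed orientable surface is $\chi(S)\ge-\chi_-(S)$, with equality exactly when $S$ has no sphere components --- not $\chi(S)\le-\chi_-(S)$ as you wrote. With the sign corrected your displayed chain collapses, since the first link now points the wrong way. The conclusion is still reachable, but by a different argument: write $\chi(G)+\chi(H)=-\chi_-(G)-\chi_-(H)+2s$ where $s$ is the number of sphere components of $G\sqcup H$, combine with $-\chi_-(G)-\chi_-(H)\le-\|\alpha\|-\|\beta\|\le-m\|[F]\|=\chi(G)+\chi(H)$ to force $s=0$, and then the equalities follow. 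Beyond that, the two places where the real work of \cite{TW} lives are both under-argued in your sketch: the coherence of the induced transverse orientations (a priori the pieces of $G$ could lie on opposite sides of different parallel copies of $F$ after the exchange --- the least-weight and disc-reduced hypotheses are precisely what forbid this, and you merely assert a ``unique compatible labelling''), and the least-weight conclusion, which you explicitly defer to \cite{TW}. So in the end your proposal, like the paper, is resting on Tollefson--Wang for the substance; the paper is simply upfront about it.
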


\begin{proposition}(least weight taut $\Rightarrow$ algebraically aspherical)\label{lw-taut=>aspherical} 
Let $M$ be a closed, oriented, irreducible 3--manifold with simplicial triangulation $\tri.$ If $(F, \nu_F)$ is a least weight taut normal surface in $M,$ then $x^\nu(F, \nu_F)$ is admissible and algebraically aspherical.
\end{proposition}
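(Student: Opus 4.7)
The plan is to mirror the strategy of Proposition \ref{admissibleaspherical}, replacing the use of $0$--efficiency there with an appeal to the Tollefson--Wang splitting Lemma \ref{lem:TW}. Admissibility is immediate: because $F$ is an embedded normal surface, at most one unoriented quad type appears in each $3$--simplex of $\tri$ (both transverse orientations of that type may occur), and therefore $x^\nu := x^\nu(F, \nu_F)$ is admissible in $NS^\nu_+(\tri)$.

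For algebraic asphericity, I fix $y^\nu \in NS^\nu_+(\tri)$ with $x^\nu \ge y^\nu$ and aim to show $\chi^*(y^\nu) \le 0$. Since $\chi^*$ is linear and the set $\{y^\nu : x^\nu \ge y^\nu\}$ is closed, it suffices to treat rational $y^\nu$. Choosing $m \in \mathbb{Z}_{>0}$ to clear denominators, I set $a^\nu := m y^\nu$ and $b^\nu := m x^\nu - m y^\nu$, both integer points of $NS^\nu_+(\tri)$, and apply the forgetful map $\varphi \co NS^\nu(\tri) \to NS(\tri)$ to obtain $mx = a + b$, where $x = \varphi(x^\nu)$, $a = \varphi(a^\nu)$, and $b = \varphi(b^\nu)$. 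Both $a$ and $b$ are admissible since each is dominated coordinate-wise by the admissible $mx$ and so can only carry quad types already present in $mx$.

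By Proposition \ref{unoriented}, $a$ and $b$ are realised by unique embedded normal surfaces $G$ and $H$, and the geometric sum $G + H$ is normally isotopic to the $m$--fold parallel copy $m\cdot F$. An innermost-disc reduction, of the kind discussed just before Proposition \ref{unoriented}, replaces $(G, H)$ by a disc-reduced pair $(G', H')$ of homeomorphic orientable normal surfaces whose geometric sum is still $m\cdot F$. Lemma \ref{lem:TW} now applies: the transverse orientation $\nu_F$ induces $\nu_{G'}, \nu_{H'}$ making $(G', \nu_{G'})$ and $(H', \nu_{H'})$ least weight taut. In particular $G'$ is aspherical, whence $\chi(G') \le 0$. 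Using the factorisation $\chi^* = \chi^*_{un} \circ \varphi$ established inside the proof of Proposition \ref{admissibleaspherical}, for which $\chi^*_{un}(x(A)) = \chi(A)$ on every embedded normal surface $A$, I conclude
$$m\, \chi^*(y^\nu) \;=\; \chi^*(a^\nu) \;=\; \chi^*_{un}(a) \;=\; \chi(G') \;\le\; 0,$$
so $\chi^*(y^\nu) \le 0$, as required.

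The step I expect to be most delicate is the disc-reduction: one must check that the innermost-disc procedure on the compatible pair $(G, H)$ outputs an orientable disc-reduced pair $(G', H')$ with its geometric sum unchanged and with individual surfaces still homeomorphic to $G$ and $H$, so that the hypothesis of Lemma \ref{lem:TW} is genuinely met. Everything else amounts to linear-algebraic bookkeeping: pushing admissibility across $\varphi$ and invoking the factorisation of $\chi^*$ already verified in the unoriented 0--efficient argument.
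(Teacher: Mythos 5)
Your proof is correct in outline but takes a genuinely different route from the paper's. The paper mimics the proof of Proposition~\ref{admissibleaspherical} and argues by contradiction: assuming $x^\nu$ is not algebraically aspherical, it produces (exactly as in the $0$--efficient case) a normal sphere $S$ with $x(S)\le nx$, disc-reduces the pair $\bigl(F(nx-x(S)),\,S\bigr)$ to a pair $\bigl(F(nx-y'),F(y')\bigr)$ in which $F(y')$ is still a sphere, and then invokes Lemma~\ref{lem:TW} to conclude that $F(y')$ can be transversely oriented to be least weight taut --- impossible for a sphere in an irreducible manifold, since taut surfaces are aspherical. Your argument is direct: for any $y^\nu\le x^\nu$ you clear denominators, decompose $mx^\nu=a^\nu+b^\nu$, pass to the unoriented shadows, disc-reduce, apply Lemma~\ref{lem:TW} to \emph{both} pieces, and read off $\chi^*(y^\nu)\le 0$ from asphericity of $G'$. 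What each buys: the paper only ever applies Lemma~\ref{lem:TW} with one piece being a sphere, whereas your version applies it to an arbitrary pair; in exchange you avoid re-running the sphere-extraction argument and the contradiction scaffolding, so the direct version is arguably cleaner.

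There is, however, a gap you should close. Lemma~\ref{lem:TW} as stated requires $G$ and $H$ to be \emph{orientable} normal surfaces, and you simply assert that $G=F(a)$ and $H=F(b)$ (and hence the disc-reduced $G',H'$) are orientable. This is not automatic: an admissible unoriented integral point $a=\varphi(a^\nu)$ can perfectly well be the coordinate of an embedded normal surface with one-sided components, since the geometric sum of two one-sided surfaces can be two-sided, so the orientability of $m\cdot F$ does not by itself propagate to the summands. You need to rule out one-sided components of $F(a)$ and $F(b)$ --- for instance by noting that in a closed orientable irreducible $M$ (other than $\RR P^3$, which has trivial $H_2$ and nothing to prove) there are no embedded projective planes, and then arguing separately about higher-genus one-sided components, or by appealing to the version of Tollefson--Wang's Lemma~3.2 that already incorporates two-sidedness. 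The paper's own invocation of Lemma~\ref{lem:TW} also leaves the orientability of $F(nx-y')$ implicit, but it at least has the advantage that one of its two pieces is manifestly a sphere; your argument needs both pieces checked.
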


\begin{proof}
The proof of the previous proposition is modified as follows. First note that if $(F(n x- y),F(y))$ is not disc reduced, then there is $y'$ such that $(F(n x- y'),F(y'))$ is disc reduced and $\chi(F(y'))=\chi(F(y))=2.$ Lemma \ref{lem:TW} implies that $F(y')$ can be transversely oriented to give a least weight taut normal surface. Since $F(y')\neq \emptyset,$ it follows that a sphere represents a non-trivial homology class contradicting the assumption that $M$ is irreducible.
\end{proof}

\begin{proof}[Proof of Theorem \ref{maintheorem}] 
We claim that $||h(y)||\le|\chi^*(y)|$ for all $y\in \cone(B),$ the cone of $B$ over the origin. It then follows that $h(B)\subseteq{\cal B}$ since $B = \cone(B)\cap\chi^*(-1).$ First we prove the claim in the special case when $y=v$ is a vertex of $B.$ Since the inequalities defining ${\cal C}$ have integral coefficients, $v$ has rational coordinates. Let $t>0$ be minimal subject to $x=t\cdot v$ is an integral point. Since $B\subset NS^{\nu}_+(\tri),$ $x$ is non-negative. Since $v$ is admissible, $x$ is admissible. By Proposition \ref{admimm}  there is a transversely oriented (unbranched) normal immersion $(f,\nu_f)\co S\rightarrow M$ such that $x^{\nu}(f,\nu_f)=x,$ and Proposition~\ref{homologymap} gives $h(x)=f_*([S])\in H_2(M).$  Since $S$ is (unbranched) immersed, Lemma~\ref{chistar} yields $\chi^*(x)=\chi(S).$   If $S$ is not connected, then $x$ is the sum over all components $S_i\subset S$ of $x(S_i)\in \cone({\cal C}).$ Since $v$ is an extreme point of ${\cal C}$ it is not a non-trivial convex combination of points in ${\cal C}.$ However $x^{\nu}(f,\nu_s) = \sum_i x^{\nu}(f|S_i,\nu_f|S_i).$ Thus $x^{\nu}(f,\nu_s)$ is a multiple of $x^{\nu}(f|S_i,\nu_f|S_i)$ for every component $S_i$ of $S.$ By minimality of $t$ it follows that $S$ is connected. Now $\chi^*(x)=t\chi^*(v)<0$ thus $S$ is not a sphere, hence $-\chi_-(S)=\chi(S)=\chi^*(x).$  Gabai \cite{g} showed that the embedded norm equals the singular norm, and so $||h(x)|| \le \chi_-(S)= |\chi(S)|=|\chi^*(x)|.$ This proves the claim in the special case. 

To prove the claim in the general case, assume $y=\sum \lambda_iv_i$ with $\lambda_i\ge0,$ where each $v_i$ is a vertex of $B.$ Then $h(y)=\sum \lambda_ih(v_i)$ so $||h(y)||\le\sum \lambda_i||h(v_i)||. $ Now $\chi^*(y)=\chi^*(\sum \lambda_i v_i) =\sum\lambda_i\chi^*(v_i).$ Since $\chi^*(v_i)< 0$ we get  $|\chi^*(y)| =\sum\lambda_i|\chi^*(v_i)|.$ By the special case, $||h(v_i)||\le|\chi^*(v_i)|,$ and it follows that $||h(y)||\le|\chi^*(y)|,$ proving the claim.

To prove the reverse containment, a rational point in $\beta\in\partial{\cal B}$ can be expressed as $\beta=[S]/|\chi(S)|$ for some norm-minimising, transversely oriented embedded surface $(S,\nu_S)$ no component of which is a sphere or a torus. By Proposition~\ref{normalize} $S$ can be isotoped into normal position. The ray through $p=x^{\nu}(S,\nu_S)$ intersects ${\cal C}$ at a point $x.$ Since $p$ is admissible, $x$ is admissible. Lemma~\ref{chistar} gives $|\chi^*(p)| = |\chi(S)|= ||[S]||.$ Since $S$ is embedded and aspherical, Proposition \ref{admissibleaspherical} implies that $p$ is algebraically aspherical if $\tri$ is 0--efficient. Otherwise, it follows from \cite{TW}, Lemma 2.1, that $(S,\nu_S)$ may be chosen to be least weight taut in which case Proposition \ref{lw-taut=>aspherical} implies that $p$ is algebraically aspherical. Hence $x$ is algebraically aspherical by Lemma~\ref{asphs}(\ref{properties:multiplicative}). We can express $x$ as a convex linear combination of some of the vertices of ${\cal C}.$ Then $x=\sum t_i\cdot x_i,$ where each $x_i$ is a vertex of ${\cal C}$ and $0<t_i\le1.$  Parts (\ref{properties:sum of alg irr}) and (\ref{properties:multiplicative}) of  Lemma \ref{asphs} imply that each $x_i$  is algebraically aspherical, and therefore by Lemma~\ref{asphs}(1) $\chi^*(x_i)\le0.$ Moreover, $x_i\le x$ therefore $x_i$ is admissible. If $\chi^*(x_i)=0,$ then the smallest integral multiple of $x_i$ is the normal coordinate of a connected, immersed surface of zero Euler characteristic, giving $h(x_i)=0$ since $M$ is atoroidal. 

Let $\hat{x}= \sum t_i\cdot \hat{x}_i,$ where $\hat{x}_i=x_i$ if $\chi^*(x_i)<0$ and $\hat{x}_i=0$ otherwise. Hence $h(\hat{x})=h(x),$ $\chi^*(x)=\chi^*(\hat{x})$ and each non-zero $\hat{x}_i$ is in $\cone(B).$ Since $\cone(B)$ is convex, $\hat{x} \in \cone(B).$ It follows that $\hat{x}/|\chi^*(\hat{x})|\in \cone(B)\cap\chi^*(-1)=B.$ Now $[S]=h(p)$ and 
$$h(\hat{x})/|\chi^*(\hat{x})|=h(x)/|\chi^*(x)| =h(p)/|\chi^*(p)|=[S]/|\chi(S)|=\beta\in\partial{\cal B}.$$
Hence $h(B)$ contains all the rational points in $\partial{\cal B}.$ The set of rational points in $\partial{\cal B}$ is dense because the Thurston norm takes integral values on integral points.  Thus $\partial\mathcal{B}\subseteq h(B).$ Since $h$ is linear and $B$ is convex,  $\mathcal{B} \subseteq h(B).$

Each vertex of ${\cal C}=\Delta^{14t-1}\cap NS^{\nu}(\tri)$ lies in a unique minimal sub-simplex of $\Delta.$ The sub-simplices of $\Delta^{14t-1}=V\cap[0,\infty)^{14t}$ correspond to coordinate subspaces of ${\Bbb R}^{14t}.$   Thus a vertex of $\partial {\cal C}$ is uniquely determined by which normal coordinates are zero. There are $14t$ coordinates, so ${\cal C}$ has at most $2^{14t}$ vertices. Every vertex of the unit ball of the Thurston norm ball, ${\cal B},$ is the image of a point on a ray through a vertex of ${\cal C}.$ This gives the claimed bound on the number of vertices of ${\cal B}.$ 
\end{proof}

\begin{proof}[Proof of Algorithm \ref{algo:fibring}]
Only part (3b) needs to be explained. Let $B_\le$ be the convex hull of the finite set of all points $\frac{v}{|\chi^*(v)|},$ where $v$ is a vertex of $\mathcal{C}$ which is admissible and satisfies $\chi^*(v) \le  0.$ Define
$$Q = \cone(B_\le) \cap \{x :  h(x)= \alpha\} \cap \{ x : \chi^*(x) = -|| \alpha || \}.$$
The \emph{total weight} of a point in $Q$ is the sum of its coordinates. Given an integer $w>0$ let $\mathcal{P}(w)$ be the finite (possibly empty) set consisting of all admissible integral points in $Q$ of total weight at most $w.$ The algorithm to construct $S$ is to increase $w$ until one of the points in $\mathcal{P}(w)$ is found, by the following procedure, to be the coordinate of an embedded transversely oriented normal surface without sphere or torus components. Given $x^\nu \in\mathcal{P}(w),$ construct the unique embedded normal surface $F$ with un-oriented normal coordinate $x=\varphi(x^\nu).$ Discard $x^\nu$ if some component of $F$ is a torus, a sphere or 1--sided. Otherwise check whether the components of $F$ can be transversely oriented to yield an embedded transversely oriented normal surface with coordinate $x.$ 

We now prove this algorithm terminates. Let $F$ be a norm-minimising oriented surface without sphere or torus components and with $[F]=\alpha\in H_2(M),$ so $|| \alpha || =-\chi(F).$ By Proposition \ref{normalize} we may assume $F$ is a transversely oriented normal surface and set $p=x^{\nu}(F,\nu_F).$ Then $p\in Q$ and $p\in\mathcal{P}(w)$ for $w$ sufficiently large. Hence the algorithm will construct $F$ or a surface with the same properties of lower weight.
\end{proof}

\begin{remark}
If $M$ is a closed, oriented 3--manifold, then $h(B_\le)$ is the unit ball of the Thurston norm --- this is a non-compact polytope if $M$ is not atoroidal.
\end{remark}



\address{Department of Mathematics, University of California Santa Barbara, CA 93106, USA}
\email{cooper@math.ucsb.edu}

\address{Department of Mathematics and Statistics, The University of Melbourne, VIC 3010, Australia} 
\email{tillmann@ms.unimelb.edu.au} 
\Addresses

\end{document}